\newtheorem{thm}{Theorem}[section]
\newtheorem{defn}[thm]{Definition}
\newtheorem{prop}[thm]{Proposition}
\newtheorem{lemma}[thm]{Lemma}
\newtheorem{cor}[thm]{Corollary}
\newtheorem{rem}[thm]{Remark}
\newcommand{\Aa}{{\mathcal A}}
\newcommand{\Bb}{{\mathcal B}}
\newcommand{\Hh}{{\mathcal H}}
\newcommand{\Ll}{{\mathcal L}}
\newcommand{\Tt}{{\mathcal T}}
\newcommand{\NM}{{\mathbb N}}
\newcommand{\RM}{{\mathbb R}}
\newcommand{\ZM}{{\mathbb Z}}
\newcommand{\TR}{{\rm Tr\,}}                       
\renewcommand{\phi}{\varphi}                  
\renewcommand{\tilde}{\widetilde}
\newcommand{\wee}{\, \tilde\wedge \,}       
\newcommand{\dtau}{d_\tau}        
\newcommand{\tdtau}{\tilde d_\tau}        
\newcommand{\ind}{O}    
\newcommand{\dsup}{d_{\text{\rm sup}}}   
\newcommand{\dinf}{d_{\text{\rm inf}}}   
\newcommand{\tdsup}{\tilde{d}_{\text{\rm sup}}}   
\newcommand{\tdinf}{\tilde{d}_{\text{\rm inf}}}   
\newcommand{\op}{\text{\rm op}}     
\newcommand{\btau}{b_\tau}           
\newcommand{\tbtau}{\tilde b_\tau}   
\newcommand{\prs}{p_{\text{\rm rs}}} 
\newcommand{\ppr}{p_{\text{\rm pr}}} 
\newcommand{\brs}{\beta_{\text{\rm rs}}} 
\newcommand{\bpr}{\beta_{\text{\rm pr}}} 
\title{A characterization of subshifts with bounded powers}
\author{J. Kellendonk$^{1}$, D. Lenz$^{2}$, J. Savinien$^{3}$\\
{\small $^{1}$ Universit\'e Lyon I, Lyon, France.} \\
{\small $^{2}$ Friedrich Schiller Universität, Jena, Germany.} \\
{\small $^{3}$ Université de Lorraine, Metz, France.}
}
\begin{document}

\maketitle

\begin{abstract}
We consider minimal, aperiodic  symbolic subshifts and show
how to characterize the  combinatorial property of bounded powers  by means of a metric property.
For this purpose we construct a family of graphs which all approximate the subshift space, and define a metric on each graph which extends to a metric on the subshift space.
The characterization of bounded powers is then  given by the Lipschitz equivalence of a suitably defined infimum
metric with the corresponding  supremum metric.
We also introduce zeta-functions and relate their abscissa of convergence to various exponents of complexity of the subshift.
\end{abstract}

\section{Introduction}
\label{sect-intro}

In symbolic dynamics one studies subshifts of the so-called full
shift over a finite alphabet $\Aa$; the latter is the $\ZM$-action
given by the left shift $\sigma$ on the set of infinite sequences with
values in $\Aa$ and a subshift is the restriction of this dynamical
system to a closed shift invariant subspace $\Xi$. Among the fields of
interest are the combinatorial properties of such subshifts. The most prominent combinatorial 
properties occurring in the literature are  recurrence and its stronger
variant linear recurrence,
repulsiveness which is equivalent to bounded powers (also referred
to as power freeness), richness, and various forms of complexity.
Such combinatorial properties often correspond to properties of the
dynamical system and hence of the $C^*$-algebras $C(\Xi)$ and
$C(\Xi)\rtimes_\sigma\ZM$. So it is a natural idea to consider non
commutative Riemannian geometries \cite{Co94}[Chap.~VI], that is, spectral
triples, on these
algebras and see how these can be used to characterize combinatorial
properties of the subshift.

While spectral triples for crossed product algebras of the above type
seem hard to set up - we are only aware of the recent attempt
\cite{BMR} which only gives a partial result,
and a version for the related crossed product with $\RM$
\cite{Whi} which seems very implicit - there has been quite
some activity in constructing spectral triples for commutative
$C^*$-algebras $C(X)$ whose space $X$ does not carry an obvious
differential Riemannian structure.
A series of works has been devoted to metric
spaces \cite{Ri99,Ri04,CI07} or more specifically to fractals
\cite{GI03,GI05,CIL08}
and Cantor sets \cite{Co94}. In particular, for ultrametric Cantor
sets the work of Pearson \& Bellissard \cite{Pea08,PB09}
can be regarded as a mile
stone. They introduced and emphasized the importance of choice
functions.

In recent work \cite{KS10}, two of the authors proposed a modification
of Pearson \& Bellissard's triple obtaining in particular
a characterization of the  combinatorial property of bounded powers  for subshifts with a unique
right-special word per length. A subshift has
bounded powers if its sequences do not contain arbitrarily high powers
of words, {\it i.e.} there is an integer $p$ such that $n$-fold
repetitions $w^n=w\cdots w$ of a word $w$ cannot occur for $n>p$.
Note that linearly  recurrent subshifts, which are commonly regarded as highly
ordered \cite{LP03, Dur00, Dur03}, share this property.  A subshift has a unique right-special
word per length if, for each $n$, there exists a unique word of
length $n$ which can be extended to the right in more
than one way to a word of length $n+1$.
The purpose of the present work is to generalize this characterization
of bounded powers to the whole class of minimal and aperiodic
subshifts.

The essential ingredient in the construction of \cite{KS10} is a
family of graphs which approximate the subshift: its vertices are
dense and its edges encode adjacencies.
Each graph gives rise to spectral triple and its associated Connes distance, and taking extrema over the family yields two metrics on the subshift space.
The result is then
that the subshift has bounded
powers if and only if the two metrics are Lipschitz equivalent.
The generalization to all subshifts given in the present work  is based on the use of
{\it a priori} different approximation graphs. These are obtained by trading
right-special words, which played a decisive role for the old graphs, against
what we call here {\em privileged words}.

Privileged words are iterated complete first returns to letters of
the alphabet. They have met a lot of  interest recently. For the class of rich subshifts
the privileged words are exactly the palindromes (see
Section~\ref{pf11.ssect-spwords} for further details).

As is often the case that, once one is lead to consider certain objects by
an abstract theory (here non commutative Riemannian geometry) and
these objects turn out useful in the context of another field (here
subshifts) one finds out that they can also be defined {\it ad hoc},
i.e.\ without any knowledge of the abstract theory.
This is the case here and so we present our construction {\it ad hoc} and add a final section
in which we explain the spectral triples underlying it.

\vspace{.5cm}

The paper is organized as follows:
We recall basic definitions about subshifts in
Section~\ref{pf11.sect-subshifts}.
We explain bounded powers and repulsiveness, we introduce {\em privileged
words} and explain their relation to palindromes
(Proposition~\ref{pf11.prop-icr}), and define subshifts of {\em almost
finite ranks}.

Section~\ref{pf11.sect-treegraph} is devoted to the construction of
the approximation graphs. For that we first
recall the definition of the {\em tree of words} $\Tt$ of a
right-infinite subshift $\Xi$.
We introduce two types of {\em horizontal edges}: one type for
right-special words and another for privileged words
(Definition~\ref{def-H} and~\ref{def-tH}). The above mentioned main
result of this work will make use only of privileged horizontal edges
but for comparison with \cite{KS10} we consider
right-special horizontal edges as well. Similar to \cite{PB09}
and as in \cite{KS10}, {\em choice functions}  (Definition~\ref{pf11.def-choices}) will play
a role to define the approximation graphs for the subshift space
and a {\em weight function} will be used to give a length to the horizontal edges.

In Section~\ref{pf11.sect-metric}, we define {\it ad hoc} a metric on $\Xi$ by
\[
\tdtau (\xi, \eta): = \sup_{f\in C(\Xi)}
\Bigl\{
|f(\xi)-f(\eta)| \, : \, |f(s(e)) - f(r(e))| \le l(e), \, \forall e
\in
\tilde E_\tau
\Bigr\}
\]
where $s(e)$ and $r(e)$ denote the source and range vertex of the edge
$e$, $l(e)$ its length,
and $\tilde E_\tau$ the realization of the horizontal edges of the
approximation graph defined
by the choice function $\tau$.
We provide an explicit formula for $\tilde d_\tau$
in Lemma~\ref{pf11.lem-specdist}. We define the extremal metrics
$\tdinf$ and $\tdsup$ and derive explicit criteria for their Lipschitz
equivalence. We also compare the above metrics with the metrics which were
obtained in \cite{KS10} (Prop.~\ref{prop-old-new}).

In Section~\ref{pf11.sect-powerfree} we state and prove our main result:
\bigskip

\noindent {\bf Theorem~\ref{pf11.thm-characterization}} {\em
Let $\Xi$ be a minimal and aperiodic $\ZM$-subshift over a finite alphabet.
Then $\Xi$ has bounded powers if and only if $\tdsup$ and $\tdinf$ are
Lipschitz equivalent. }
\smallskip

In Section~\ref{pf11.sect-Zeta} we introduce two families of {\em zeta-functions}.
These are defined by Dirichlet series and their summability is related to various exponents of complexity of the subshift.

In the last Section~\ref{pf11.sect-spectrip} we briefly explain the non commutative geometrical constructions underlying this work.
We provide the spectral triple associated to an approximation graph, show that the associated Connes distance is $\tdtau$, and relate the zeta-function of the spectral triple to the zeta-functions defined in Section~\ref{pf11.sect-Zeta}.

\paragraph{Acknowledgments}
This work was supported by the ANR grant {\em SubTile} no. NT09 564112.
The authors would like to thank Luca Zamboni for useful discussions;
in particular he explained them the notion of rich words and showed them Proposition~\ref{pf11.prop-icr}.

\section{Subshifts}
\label{pf11.sect-subshifts}

A subshift is a subspace \(\Xi \subset \Aa^\ZM\) of sequences over a finite alphabet $\Aa$, that is closed (for the product topology) and invariant under the left-shift map $\sigma$.
A (finite) word occurring in some infinite word $\xi \in \Xi$ is called a {\em factor}.
The set $\Ll$ of all factors of all $\xi \in \Xi$ is called the {\em language} of the subshift.
We consider subshifts that are {\em aperiodic}: \(\forall \xi \in \Xi, \, \sigma^n(\xi) = \xi \Rightarrow n=0\), and for which the dynamical system given by the action of $\ZM$ by the shift is {\em minimal} (every orbit is dense).

The length of a word $u$ is written $|u|$. 
Given $u,v \in \Ll$, we write \(v \preceq u\) to mean that $v$ is a prefix of $u$, and $v\prec u$ if $v$ is a proper prefix ({\it i.e.} $|v|<|u|$).
Similarly we write \( u \succeq v\) or \(u \succ v\) if $v$ is a suffix or proper suffix of $u$.

\subsection{Bounded powers}
\label{pf11.ssect-bdpowers}

A subshift $\Xi$ has {\em bounded powers} if there exists an integer $p$
such that any word can occur at most $p$ times consecutively: \(
\forall u \in \Ll,\; u^{p+1} \notin \Ll\). This is sometimes also
called power free.

The following characterization of bounded powers will be useful.
Define the {\em index of repulsiveness} of a subshift $\Xi$ with
language $\Ll$ as
\begin{equation}
\label{pf11.eq-indexrepuls}
\ell := \inf\Big\{
\frac{|W| - |w|}{|w|} \, : \, w, W \in \Ll, \;
w \; \text{\rm is a proper prefix and suffix of } W
\Big\}\,.
\end{equation}
A subshift is called {\em repulsive} if $\ell>0$.
\begin{lemma}
\label{pf11.rem-powers}
A subshift with has bounded powers if and only if it is repulsive.
\end{lemma}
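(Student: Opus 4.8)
The statement is an equivalence between two quantitative notions, so the plan is to prove both directions by contraposition, essentially extracting the relevant quantities from each other. Recall that bounded powers means: there is an integer $p$ with $u^{p+1}\notin\Ll$ for all $u\in\Ll$; repulsiveness means $\ell>0$, where $\ell$ is the infimum in \eqref{pf11.eq-indexrepuls} taken over pairs $w\prec W$ with $w$ both a proper prefix and a proper suffix of $W$. The key observation linking the two is the standard combinatorics-on-words fact: if a word $w$ is simultaneously a proper prefix and a proper suffix of $W$, then $W$ is ``almost periodic'' with period $q:=|W|-|w|$; more precisely $W$ is a prefix of $x^k$ for the prefix $x$ of $W$ of length $q$ and suitable $k$, and conversely any high power $u^n$ witnesses such an overlap with $w$ a large proper prefix-and-suffix of $W=u^n$.

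\emph{Bounded powers $\Rightarrow$ repulsive.} I would prove the contrapositive: if $\ell=0$, there exist pairs $(w_j,W_j)$ with $w_j$ a proper prefix and suffix of $W_j$ and $(|W_j|-|w_j|)/|w_j|\to 0$. Set $q_j:=|W_j|-|w_j|$; the overlap forces $W_j$ (hence $w_j$) to have period $q_j$, so letting $x_j$ be the length-$q_j$ prefix of $W_j$, the word $w_j$ is a prefix of $x_j^{m_j}$ with $m_j\ge \lfloor |w_j|/q_j\rfloor$. Since $|w_j|/q_j\to\infty$, the power $m_j\to\infty$; a word of $\Ll$ that contains a prefix of $x_j^{m_j}$ of length $\ge (m_j-1)q_j$ contains $x_j^{m_j-1}\in\Ll$, so $\Ll$ contains arbitrarily high powers and $\Xi$ does not have bounded powers.

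\emph{Repulsive $\Rightarrow$ bounded powers.} Again contrapositive: if $\Xi$ does not have bounded powers, then for every $n$ there is $u_n\in\Ll$ with $u_n^{\,n}\in\Ll$. Put $W:=u_n^{\,n}$ and $w:=u_n^{\,n-1}$ (or the prefix of $W$ of length $|W|-|u_n|$); then $w$ is a proper prefix of $W$, and since $W=u_n\cdot u_n^{\,n-1}$, $w$ is also a proper suffix of $W$. We get $(|W|-|w|)/|w| = |u_n|/((n-1)|u_n|) = 1/(n-1)$, which tends to $0$, so $\ell=0$ and $\Xi$ is not repulsive. One should double-check the degenerate edge cases (e.g. whether ``proper prefix and suffix'' in \eqref{pf11.eq-indexrepuls} is meant with $w$ nonempty; if $w=\varepsilon$ is disallowed, just take $n\ge 2$ so $w=u_n^{\,n-1}\ne\varepsilon$).

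The only real content is the combinatorics-on-words lemma used in the first direction — that a word with a proper border of length $|W|-q$ has period $q$ — which is the Fine–Wilf-type statement and is completely standard; the rest is bookkeeping with the indices. Thus I expect no serious obstacle, the proof being a short unwinding of the two definitions through the period/power correspondence. The reverse direction is essentially immediate from the definitions, which is presumably why the authors phrase this as a remark-level lemma.
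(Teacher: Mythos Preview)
Your proposal is correct and follows essentially the same route as the paper: both directions are argued by contraposition, with the ``not bounded powers $\Rightarrow$ $\ell=0$'' step done via $W=u^{n}$, $w=u^{n-1}$, and the ``$\ell=0$ $\Rightarrow$ arbitrarily high powers'' step done by observing that a proper border of length $|W|-q$ forces period $q$, hence a large power of the length-$q$ prefix sits inside $W$. The paper phrases the latter as $w=u^{p-1}v$, $W=u^{p}v$ with $|u|=|W|-|w|$, which is exactly your periodicity observation in different notation.
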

\begin{proof}
If $\Xi$ has arbitrarily large powers, for all integer $p$ there exists a word $u \in \Ll$ such that $u^p \in \Ll$.
Take $w=u^{p-1}$ and $W=u^p$ in equation~\eqref{pf11.eq-indexrepuls}, to get $\ell \le 1/(p-1)$.
Since this must hold for any $p$, we conclude that $\ell = 0$.
Conversely, if $\ell=0$, then for any $\epsilon>0$ arbitrarily small, there exists words $w,W \in \Ll$ as in equation~\eqref{pf11.eq-indexrepuls} such that the ratio \((|W|-|w|)/|w|\) is less than $\epsilon$.
This implies that the two occurrences of $w$ in $W$ overlap, and in turns that one can write $w=u^{p-1}v$ and $W=u^pv$ for some $u,v\in \Ll$ with $0<|v|\le |u|$, and with $p$ greater than or equal to the integer part of $1/\epsilon$.
Hence $\Xi$ has arbitrarily large powers.
\end{proof}

One defines a right- of left-infinite subshift similarly as a subset \(\Xi \subset \Aa^\NM\) of right- or left-infinite sequences.
Given a subshift $\Xi$ one denotes by $\Xi^\pm$ the right- and left-infinite subshifts derived from $\Xi$ (by dropping the left or right parts of infinite words in $\Xi$).

\begin{lemma}
\label{pf11.lem-repulsive}
Let $\Xi$ be a minimal and aperiodic subshift.
The following assertions are equivalent:
\begin{enumerate}[(i)]
\item $\Xi$ has bounded powers;

\item $\Xi^+$ has bounded powers;

\item $\Xi^-$ has bounded powers.
\end{enumerate}
\end{lemma}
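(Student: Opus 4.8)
The plan is to prove the equivalence of (i), (ii), (iii) by showing that the language of $\Xi$ and the languages of $\Xi^+$ and $\Xi^-$ all coincide, so that the combinatorial property of bounded powers — which, by definition, only refers to the set of factors — is literally the same statement for all three. The key observation is that for a minimal, aperiodic $\ZM$-subshift, a finite word $u$ is a factor of some bi-infinite sequence in $\Xi$ if and only if it is a factor of some right-infinite sequence in $\Xi^+$, if and only if it is a factor of some left-infinite sequence in $\Xi^-$. Granting this, bounded powers for $\Xi$ says ``there is $p$ with $u^{p+1}\notin\Ll$ for all $u$'', and the same sentence with $\Ll$ replaced by the language $\Ll^+$ of $\Xi^+$ (resp.\ $\Ll^-$ of $\Xi^-$) is the assertion (ii) (resp.\ (iii)); since $\Ll=\Ll^+=\Ll^-$, all three are equivalent.

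So the real content is the identity $\Ll=\Ll^+=\Ll^-$. First I would observe $\Ll^+\subseteq\Ll$ and $\Ll^-\subseteq\Ll$: by construction $\Xi^\pm$ is obtained by truncating elements of $\Xi$, so any factor of a truncation is a factor of the original bi-infinite word. For the reverse inclusion $\Ll\subseteq\Ll^+$, take $u\in\Ll$, say $u$ occurs in $\xi\in\Xi$, i.e.\ $u=\xi_k\cdots\xi_{k+|u|-1}$; then the right-infinite sequence $\eta=\xi_k\xi_{k+1}\xi_{k+2}\cdots$ lies in $\Xi^+$ and contains $u$ as a prefix, hence $u\in\Ll^+$. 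The argument for $\Ll\subseteq\Ll^-$ is symmetric, using the left-infinite tail $\cdots\xi_{k-2}\xi_{k-1}\xi_{k+|u|-1}$ ending at the last letter of $u$. One should check that these truncations genuinely land in $\Xi^\pm$ as the excerpt defines them (``dropping the left or right parts of infinite words in $\Xi$''), which is immediate from the definition; here minimality and aperiodicity are not even needed for the equality of languages, though they are the standing hypotheses under which ``bounded powers'' is the property of interest.

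The main (very mild) obstacle is purely bookkeeping: making sure the definitions of $\Xi^+$ and $\Xi^-$ as subshifts of $\Aa^\NM$ are set up so that ``the language of $\Xi^+$'' is the natural object and that truncation is surjective onto it in the sense needed. If one instead wanted a statement not phrased through languages — e.g.\ about $\Xi^+$ as a dynamical system — one could note that $\Xi^+$ is the one-sided subshift with the same language as $\Xi$, and one-sided and two-sided subshifts with equal languages have equal ``bounded powers'' indices; but the language identity is the cleanest route. I expect the whole proof to be a short paragraph once the inclusions above are spelled out, with Lemma~\ref{pf11.rem-powers} available if one prefers to phrase the equivalence in terms of repulsiveness (the index of repulsiveness $\ell$ in~\eqref{pf11.eq-indexrepuls} depends only on $\Ll$, so it too is unchanged in passing between $\Xi$, $\Xi^+$, $\Xi^-$).
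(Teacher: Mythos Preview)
Your proposal is correct and follows essentially the same approach as the paper: the paper's proof is the one-line observation that since $\Xi$, $\Xi^+$, $\Xi^-$ share the same language, their indices of repulsiveness coincide ($\ell^\pm=\ell$), whence the equivalence via Lemma~\ref{pf11.rem-powers}. Your argument spells out the language equality $\Ll=\Ll^+=\Ll^-$ in more detail and notes (correctly) that bounded powers is a property of the language alone, which is exactly the content of the paper's proof.
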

\begin{proof}
Since the three subshifts have the same language, the indices of repulsiveness of $\Xi^\pm$ are equal to that of $\Xi$: $\ell^\pm=\ell$.
\end{proof}

\subsection{Privileged words}
\label{pf11.ssect-spwords}

We consider a minimal and aperiodic {\em right-infinite} subshift $\Xi$ with language $\Ll$ over a finite alphabet.
As a consequence of minimality, given a word $u\in \Ll$, there exists finitely many non-empty words $u' \in \Ll$, called {\em complete first return} words to $u$, such that
\begin{enumerate}[(i)]
\item $u$ is a prefix and a suffix of $u'$,

\item $u$ occurs exactly twice in $u'$.
\end{enumerate}
If $u$ is the empty word, its complete first returns are by definition the letters of the alphabet.
An $n$-th iterated complete first return of $u$ is a word $u^{(n)}$ for which there exists words \(u^{(j)}, j=0, \cdots n-1\), such that $u^{(0)}=u$ and $u^{(j+1)}$ is a complete first return to $u^{(j)}$, for \(j=0, \cdots n-1\).
An $n$-th iterated complete first return word $u$ of the empty word will be called an $n$-th order {\em privileged word}, and we will denote by $\ind(u)=n$ its order.
So for instance the unique $0$-th order privileged word is the empty word, and the $1$-th order privileged words are the letters of the alphabet.
\medskip

We say that a subshift has {\em finite privileged rank} if there is a finite number $N$ such that
any privileged word $u$ has only finitely many complete first return words $u'$. Using Bratteli Vershik diagram techniques \cite{HPS}
to describe the subshift, based on a Kakutani-Rohlin towers whose bases are cylinder sets of privileged words (see \cite{Du10}),
one easily sees that this
implies
that the rationalized \v{C}ech-cohomology of the subshift space is finite generated. We will need a generalization: We say that a subshift has {\em almost finite privileged rank} if there are constants $a,b>0$ such that the number of complete first return words of a privileged word $u$
is bounded by $ a \log(|u|)^b$.

\medskip
We now show the relation between privileged words and palindromes.
An infinite word $\xi$ is called {\em rich} \cite{GJWZ09} if any factor $u$ of $\xi$ contains exactly $|u|+1$ palindromes.
The notion of privileged words is a ``maximal generalization'' of palindromes: indeed one can easily see that any factor $u$ of any infinite word contains exactly $|u|+1$ privileged words.
A characteristic property of rich words (\cite{BLGZ09} Proposition 1) is that any complete first return to a palindrome is a palindrome.
\begin{prop}
\label{pf11.prop-icr}
Let $\xi$ be an infinite word over a finite alphabet, and $u$ a factor of $\xi$.
\begin{enumerate}[(i)]
\item If $u$ is a palindrome then it is a privileged word.

\item If $\xi$ is rich, then $u$ is a palindrome if and only if $u$ is a privileged word.
\end{enumerate}
\end{prop}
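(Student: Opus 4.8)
The plan is to prove both statements by induction on the structure of complete first returns, using the characteristic properties of palindromes and of rich words recalled just above. For part~(i), I would argue by strong induction on $|u|$ that every palindrome factor $u$ of $\xi$ is a privileged word, i.e.\ an iterated complete first return to the empty word. The base cases are the empty word and the single letters, which are privileged by definition. For the inductive step, let $u$ be a palindrome with $|u|\ge 2$. The key observation is that a palindrome $u$ has a unique longest \emph{proper} palindromic prefix $v$; since $u$ is a palindrome, $v$ is also its longest proper palindromic suffix. I claim $u$ is a complete first return to $v$, which by the induction hypothesis (applied to $v$, a shorter palindrome) finishes the argument. To see the claim, one checks conditions~(i) and~(ii) in the definition of complete first return: $v$ is a prefix and suffix of $u$ by construction, and $v$ occurs \emph{exactly twice} in $u$ --- any third occurrence of $v$ strictly inside $u$ would, by a standard overlap/reflection argument for palindromes, produce a proper palindromic prefix of $u$ longer than $v$, contradicting maximality. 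This overlap argument is the technical heart of part~(i).

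For part~(ii), assume $\xi$ is rich. The ``only if'' direction is part~(i), so only ``if'' remains: every privileged factor $u$ of $\xi$ is a palindrome. Here I would induct on the order $\ind(u)=n$. For $n=0,1$ the statement is trivial (empty word and letters are palindromes). For the inductive step, write $u=w^{(n)}$ as a complete first return to a privileged word $w=w^{(n-1)}$ of order $n-1$; by the induction hypothesis $w$ is a palindrome. Now invoke the characteristic property of rich words cited above: in a rich word, any complete first return to a palindrome is a palindrome. Since $u$ is a complete first return to the palindrome $w$, it follows that $u$ is a palindrome.

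The main obstacle I anticipate is the overlap argument in part~(i): showing that the longest proper palindromic prefix $v$ of a palindrome $u$ occurs exactly twice in $u$ (and hence that $u$ is genuinely a \emph{complete first} return to $v$, not merely a return). The subtlety is ruling out an interior occurrence of $v$: one must use that $u=\tilde u$ (where $\tilde{}$ denotes reversal) so that an occurrence of $v$ starting at position $i$ in $u$ forces a mirror occurrence ending at position $|u|-i$, and then a period/border argument shows these occurrences would either coincide or force a longer palindromic prefix. Everything else is bookkeeping with the definitions of privileged word and complete first return and a clean appeal to the two cited facts about rich words, so the writeup should be short once that lemma is isolated.
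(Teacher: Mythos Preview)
Your plan matches the paper's proof step for step: for (i) you induct on $|u|$, take $v$ to be the longest proper palindromic prefix of the palindrome $u$, and claim that $u$ is a complete first return to $v$; for (ii) you induct on the order and invoke the cited characterization of richness. So strategically there is nothing to distinguish your approach from the paper's.

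The obstacle you flag in part (i) is, however, genuine and fatal --- the paper glosses over it with the bare phrase ``by maximality of $|v|$'', and your proposed overlap/reflection argument cannot close it. The assertion that the longest proper palindromic prefix $v$ of a palindrome $u$ occurs exactly twice in $u$ is false. Take $u = abcacba$: this is a palindrome whose only proper border is $v=a$, and $a$ occurs three times, at positions $0,3,6$. The interior occurrence sits at the exact centre of $u$, so its mirror under the reversal of $u$ is itself; there is no overlap with the prefix copy and no longer palindromic prefix is produced. Worse, since $a$ is the \emph{only} border of $abcacba$ and it occurs three times, $abcacba$ is not a complete first return to any word whatsoever, hence not privileged --- so statement (i) as written is actually false, not just unproved. (Your reflection idea does dispose of interior occurrences that overlap the prefix copy, but the centred non-overlapping case survives.) Under the richness hypothesis the step \emph{can} be saved, by a different mechanism: the complete first return to $v$ that begins at position $0$ in $u$ is a factor of $\xi$ and a complete first return to a palindrome, hence a palindrome by \cite{BLGZ09}; if it were a proper prefix of $u$ it would be a palindromic prefix longer than $v$, contradicting maximality, so it must equal $u$. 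That argument is what should replace the overlap claim, and it simultaneously supplies the ``only if'' direction you need for (ii), so part (ii) survives intact.
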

\begin{proof}
We prove this by induction on $|u|$. The statements are trivial if $|u|=0,1$.

(i) Choose a palindrome $u$, with $|u|>1$, and assume that the statement holds for any word of length less than $|u|$.
Let $v$ be the largest proper palindromic prefix of $u$.
Since $u$ is a palindrome, $v$ is also a suffix of $u$.
Now by maximality of $|v|$, $v$ can only occur twice in $u$.
Hence $u$ is a complete first return of $v$, and therefore a palindrome.

(ii) Choose a privileged word $u$, with $|u|>1$, and assume that the statement holds for any word of length less than $|u|$.
Let $v$ be the privileged word to which $u$ is the complete first return word (note that $v$ is unique).
As $|v|<|u|$, $v$ is a palindrome, and therefore $u$ is a palindrome (as a complete first return to a palindrome).
\end{proof}

\medskip

A word $u\in \Ll$ is called {\em right-special} if it has more than one one-letter right extension: \(\exists a,b \in \Aa,\; a\neq b, \; ua, ub\in \Ll\).
If for all $n\in\NM$ the subshift has a unique right-special word of length $n$, one says that the subshift has {\em a unique right-special word per length}.

Given a word $u$ we denote by $S(u)$ the set of all right-special words $r$, for which there exists a complete first return $u'$ to $u$ such that \( u \preceq r \prec u'\).

\begin{lemma}
The following assertions are equivalent:
\begin{enumerate}[(i)]
\item Given a privileged word $u$ and any complete first return $u'$ to $u$, there exists a unique right-special word $r$ such that \( u \preceq r \prec u'\); 

\item Given a right-special word $r$ and the smallest proper right-special extension $r'$ of $r$, there exists a unique privileged word $u$ such that \( r \preceq u \prec r'\); 

\item Given a privileged word $u$, $S(u)$ contains exactly one (right-special) element.

\end{enumerate}
\end{lemma}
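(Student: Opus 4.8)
The plan is to translate everything into the geometry of the tree of words $\Tt$ of the one-sided subshift, whose vertices are the factors in $\Ll$ and in which $ua$ is a child of $u$ exactly when $ua\in\Ll$; the right-special words are precisely the vertices of $\Tt$ with at least two children. The first move is to attach to each privileged word $v$ its \emph{return tree} $T_v$, the finite subtree of $\Tt$ spanned by $v$ together with the complete first returns to $v$, and to record: its leaves are exactly the complete first returns to $v$; no complete first return is a proper prefix of another one, because $v$ occurs in it exactly twice; every internal vertex of $T_v$ has all of its $\Tt$-children inside $T_v$ (by minimality every branch through such a vertex meets a complete first return to $v$, and that return cannot be overshot, again because $v$ occurs exactly twice); hence for every $w$ with $v\preceq w\prec u'$ (some complete first return $u'$ to $v$), being right-special is the same as being a branching vertex of $T_v$, so $S(v)$ equals the set of branching vertices of $T_v$; and, since a privileged word with a single complete first return would force $\Xi$ to be periodic, $T_v$ has at least two leaves and therefore at least one branching vertex. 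In particular $S(v)\neq\emptyset$, and for every complete first return $u'$ to $v$ the set $R(v,u'):=\{r\ \text{right-special}: v\preceq r\prec u'\}$ — the branching vertices on the path from $v$ to $u'$ — is nonempty, since it contains $\mathrm{lcp}(u',u'')$ for any other complete first return $u''$.

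With this dictionary, (i) reads ``every path from $v$ to a complete first return meets exactly one branching vertex of $T_v$'', and (iii) reads ``$T_v$ has exactly one branching vertex''. Thus (iii)$\Rightarrow$(i) is immediate from $\emptyset\neq R(v,u')\subseteq S(v)$. For (i)$\Rightarrow$(iii) I take two distinct complete first returns $u',u''$ to $v$: their longest common prefix $r$ is a branching vertex of $T_v$ (they diverge there) and, by the ``occurs exactly twice'' property, a proper prefix of each, so $r\in R(v,u')\cap R(v,u'')$; if both sets are singletons this forces the per-leaf branching vertices to agree, and letting $u',u''$ run over all pairs of leaves yields that $T_v$ has a single branching vertex. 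This settles (i)$\Leftrightarrow$(iii).

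For the equivalence with (ii) I would use that the return trees tile $\Tt$: the leaves of $T_v$ are the roots of the next-generation return trees, and — this is a lemma proved by induction on word length, using only that a complete first return to $v$ cannot be a proper prefix of another — $T_v$ contains no privileged word other than $v$ and its complete first returns. Hence every vertex of $\Tt$ lies in a unique tile, the privileged words being the gluing points, and each right-special word is the branching vertex of exactly one tile. Under (iii) this makes $v\mapsto$ (the branching vertex of $T_v$) a bijection from privileged words onto right-special words, and along any branch of $\Tt$ the privileged and right-special words interleave as $v_0\preceq r_{v_0}\prec v_1\preceq r_{v_1}\prec v_2\preceq\cdots$, where $v_{i+1}$ is a complete first return to $v_i$ and $r_{v_i}$ is the branching vertex of $T_{v_i}$ (a privileged word and the following right-special word coincide precisely when that privileged word is itself right-special). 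Reading (ii) off this picture: for a right-special word $r=r_{v_i}$, its smallest proper right-special extension $r'$ is the shortest of the next branching vertices $r_{v_{i+1}}$ over the complete first returns $v_{i+1}$ of $v_i$, and the interval $[r,r')$ then carries exactly one privileged word — namely $v_{i+1}$ for the branch realising the minimum, or $r$ itself when $r$ is privileged. Conversely, if (iii) fails, some $T_v$ has two branching vertices, and propagating this defect along the interleaving exhibits a right-special word whose interval $[r,r')$ contains either none or two privileged words, contradicting (ii).

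The step I expect to be the main obstacle is exactly this last matching of (ii) to the tree picture. The nuisance is that right-special words are \emph{not} totally ordered by the prefix relation — two right-special words on different children of a common branching vertex are incomparable — so ``the smallest proper right-special extension of $r$'' must be treated branch by branch, and one has to check both that it is unambiguously defined and that the branch realising the minimum is the correct one: that the complete first return met on it is right-special exactly when $r$ is privileged, so that $[r,r')$ carries precisely one privileged word. This, together with the tiling lemma (no privileged word in the interior of $T_v$), is where the real combinatorial work lies; the rest is the routine counting of branching vertices along paths and the bookkeeping of children already set up in the first two paragraphs.
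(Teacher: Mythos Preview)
Your argument for (i)$\Leftrightarrow$(iii) is correct and is essentially the paper's: the paper observes that for two distinct complete first returns $u'_1,u'_2$ to $u$, the unique right-special word in $[u,u'_1)$ must coincide with that in $[u,u'_2)$ --- which is exactly your longest-common-prefix step, reformulated in your return-tree language as ``the branching vertex on each root-to-leaf path is the same''. Your return-tree picture is a pleasant repackaging, and the auxiliary facts you record (leaves are the complete first returns; internal vertices of $T_v$ keep all their $\Tt$-children; $S(v)$ is the set of branching vertices of $T_v$) are correct and worth making explicit.

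For (i)$\Leftrightarrow$(ii) you are considerably more elaborate than the paper, which dispatches it in a single sentence: ``Equivalence of the first two conditions follows easily from aperiodicity, and the fact that if $u$ is privileged and $u'$ a complete first return to $u$ then there exists no privileged word $v$ such that $u\prec v\prec u'$.'' That ``fact'' is precisely your tiling lemma (no privileged word in the interior of $T_v$), so the key ingredient is the same. The obstacle you flag --- that ``the smallest proper right-special extension of $r$'' is not a per-branch notion, and that one must verify that on the minimising branch the complete first return is right-special exactly when $r$ itself is privileged --- is a genuine subtlety that the paper's one-line proof does not address. Your instinct to treat it carefully is sound; the paper simply asserts the equivalence. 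In short: your plan matches the paper's approach, with the same hinge lemma, and you are being more scrupulous about (ii) than the authors were.
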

\begin{proof}
Equivalence of the first two conditions follows easily from aperiodicity, and the fact that if $u$ is privileged and $u'$ a complete first return to $u$ then there exists no privileged word $v$ such that \(u\prec v \prec u'\).
The third condition clearly implies the first.
Suppose the first and consider $u'_1,u'_2$, two different complete first returns to $u$.
Then the unique right-special word between $u$ and $u'_1$ coincides with that between $u$ and $u'_2$.
It follows that $S(u)$ contains only one element.
\end{proof}

We call a subshift satisfying the above equivalent conditions {\em right-special balanced}.
The following lemma shows that subshifts studied in \cite{KS10} are right-special balanced.
\begin{lemma}
If a subshift has a unique right-special word per length then it is right-special balanced.
\end{lemma}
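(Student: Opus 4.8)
The plan is to verify condition (i) of the preceding lemma directly: for a privileged word $u$ and an arbitrary complete first return $u'$ to $u$, one must exhibit exactly one right-special word $r$ with $u\preceq r\prec u'$, and then invoke the equivalence (i)$\Leftrightarrow$(ii)$\Leftrightarrow$(iii) already established. First I would record the standard fact that, in a (non-degenerate) minimal and aperiodic subshift, every word, and in particular every privileged word $u$, has at least two complete first return words: if $u$ had a single complete first return $u'$, then in any $\xi\in\Xi^+$ consecutive occurrences of $u$ would be separated by the constant gap $|u'|-|u|$, which forces $\xi$ to be periodic and contradicts aperiodicity together with minimality (for $u$ the empty word the complete first returns are the letters of $\Aa$, of which there are at least two).

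For existence I would pick a second complete first return $u''\neq u'$. Neither of $u',u''$ can be a prefix of the other, for if, say, $u'\preceq u''$, then $u$ would occur in $u''$ at positions $0$, $|u'|-|u|$ and $|u''|-|u|$, three distinct positions unless $u'=u''$. Hence the longest common prefix $w$ of $u'$ and $u''$ is a proper prefix of $u'$; it is followed by different letters in $u'$ and in $u''$, so $w$ is right-special; and $u\preceq w$ because $u$ is a common prefix of $u'$ and $u''$. Thus $u\preceq w\prec u'$ with $w$ right-special. Note this part uses only minimality and aperiodicity, not the hypothesis on right-special words.

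For uniqueness, which is where the hypothesis enters, suppose $r_1$ and $r_2$ are right-special words, both prefixes of $u'$, with $|u|\le|r_1|<|r_2|\le|u'|-1$; being prefixes of $u'$ of distinct lengths, $r_1\prec r_2$. I would use that a suffix of a right-special word is right-special: the length-$|r_1|$ suffix of $r_2$ is then right-special of length $|r_1|$, hence equals $r_1$ by the unique-right-special-word-per-length hypothesis. Therefore $r_1$ occurs in $u'$ starting at position $|r_2|-|r_1|>0$; since $u\preceq r_1$, the word $u$ occurs in $u'$ at that same position. But $u'$ is a complete first return to $u$, so $u$ occurs in $u'$ only at positions $0$ and $|u'|-|u|$; as $|r_2|-|r_1|>0$ this forces $|r_2|-|r_1|=|u'|-|u|$, i.e.\ $|r_2|=|u'|-|u|+|r_1|\ge|u'|$, contradicting $|r_2|\le|u'|-1$. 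Hence $r$ is unique.

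The main obstacle is precisely this uniqueness step: it hinges on observing that, under the hypothesis, the family of right-special words is totally ordered by the suffix relation (the length-$n$ suffix of the length-$(n+1)$ right-special word is again right-special, hence equals the length-$n$ one), and on combining this suffix-nesting with the defining ``exactly two occurrences'' property of a complete first return to exclude two nested right-special prefixes of $u'$. I expect no difficulty in the existence step, nor in concluding, since condition (i) is one of the equivalent formulations of right-special balancedness proved in the lemma immediately above.
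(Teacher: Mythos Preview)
Your argument is correct and follows essentially the same route as the paper: both proofs use the hypothesis to conclude that $r_1$ is a suffix of $r_2$, and then derive from this an extra occurrence of $u$ inside $u'$, contradicting the complete-first-return property. The only differences are cosmetic: you track the position of this extra occurrence of $u$ directly, while the paper phrases it as ``$r_2$ contains a non-trivial complete first return to $u$''; and you spell out the existence part, which the paper leaves implicit.
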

\begin{proof}
Let $u'$ be a complete first return to $u$ and $r_1,r_2$ two  right-special words satisfying $u\preceq r_1\prec r_2 \prec u'$.
By uniqueness of right-special factors of length $|r_1|$, $r_1$ must be a suffix of $r_2$.
Hence, if $r_2 \neq r_1$, then $r_2$ is a non-trivial complete first return to $r_1$ and thus contains a non-trivial complete first return to $u$, which is a contradiction.
\end{proof}

\section{Trees and graphs}
\label{pf11.sect-treegraph}

We consider a minimal and aperiodic {\em right-infinite} subshift $\Xi$ over a finite alphabet $\Aa$, with language $\Ll$.

\subsection{The tree of words}
\label{pf11.ssect-tree}

As in \cite{KS10} we consider the tree of words \(\Tt=(\Tt^{(0)},\Tt^{(1)})\): the vertices are the words in $\Ll$ (the root being the empty word), and there is an edge linking a word to each of its one-letter right extension.
The set of infinite rooted paths $\Pi_\infty$ on $\Tt$ can be seen as a subset of $\Aa^\NM$ and shall be equipped with the relative topology of the product topology on $\Aa^\NM$.
It is well known that  $\Pi_\infty$ is homeomorphic to $\Xi$ and hence we identify the two.
In fact, the cylinder sets $[v]$, of all infinite rooted paths through $v\in \Tt^{(0)}$, form a basis of clopen (closed and open) sets for the topology.
Let us denote by $H^{(0)}$ the set of right-special words and by $\tilde H^{(0)}$ the set of privileged words.
It is clear that the above base of the  topology is given by $\{[v]:v\in H^{(0)}\}$.
\begin{lemma}
\label{pf11.lem-infinitepaths}
The cylinder sets $[v]$ for $v\in \tilde H^{(0)}$  also form a basis of clopen sets for the topology.
\end{lemma}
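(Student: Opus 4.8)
The plan is to show that the family $\{[v] : v \in \tilde H^{(0)}\}$ of cylinder sets of privileged words is a basis for the topology on $\Xi$, knowing already that $\{[v] : v \in \Ll\}$ (all cylinder sets) is a basis. Since the cylinder sets $[v]$, $v\in\Ll$, already generate the topology, it suffices to prove that every cylinder $[w]$ with $w\in\Ll$ is a \emph{union} of cylinders $[v]$ with $v$ privileged; equivalently, that for every $w\in\Ll$ and every $\xi\in[w]$ there is a privileged word $v$ with $w\preceq v$ and $\xi\in[v]$. So the whole statement reduces to the following combinatorial claim: \emph{for every factor $w$ of $\xi$, the word $\xi$ has a privileged prefix-extension of $w$, i.e.\ there is a privileged word $v$ with $w\preceq v\prec\xi$.}

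To prove this claim I would argue as follows. Fix $\xi\in\Xi$ and a factor $w$ with $w\preceq\xi$. Recall from Section~\ref{pf11.ssect-spwords} that every factor $u$ of $\xi$ contains exactly $|u|+1$ privileged words (the ``maximal generalization of palindromes'' remark); in particular, taking $u$ to be a prefix of $\xi$ of length $n$, the prefix of length $n$ has exactly $n+1$ privileged prefixes counted with the empty word, and so $\xi$ has privileged prefixes of unboundedly many lengths — indeed, for each $n$ the length-$n$ prefix of $\xi$ contains $n+1$ privileged words, all of which are prefixes of that prefix hence prefixes of $\xi$, so $\xi$ has arbitrarily long privileged prefixes. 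Thus the set of privileged prefixes of $\xi$ forms an infinite increasing chain $\emptyset = v_0 \prec v_1 \prec v_2 \prec \cdots$, where $v_{k+1}$ is the shortest privileged prefix of $\xi$ strictly longer than $v_k$. The key point is that $v_{k+1}$ is exactly the complete first return to $v_k$ occurring as a prefix of $\xi$: indeed $v_k$ is a prefix and suffix of $v_{k+1}$, and if $v_k$ occurred a third time strictly inside $v_{k+1}$ then, by the characterization of privileged words, there would be a privileged word strictly between $v_k$ and $v_{k+1}$ in the prefix order, contradicting minimality of $v_{k+1}$. Hence the chain of privileged prefixes of $\xi$ is cofinal among all prefixes of $\xi$, so for the given $w\preceq\xi$ there is some $k$ with $w\preceq v_k$, and then $[v_k]\subseteq[w]$ with $\xi\in[v_k]$. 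Finally, $[v_k]$ is clopen (all cylinders are), which completes the verification that $\{[v]:v\in\tilde H^{(0)}\}$ is a basis of clopen sets.

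The main obstacle is the claim that the privileged prefixes of $\xi$ are cofinal, i.e.\ that there is no ``last'' privileged prefix and no gap. The cleanest route is the counting identity that any factor of length $n$ contains exactly $n+1$ privileged words (stated in the excerpt as an easy observation): applied to the length-$n$ prefix $p_n$ of $\xi$, every privileged factor of $p_n$ is automatically a prefix of $p_n$ — because a privileged word that is a suffix of $p_n$ at position $j>0$ would be a privileged factor not anchored at the start, which one checks cannot be counted among the $|u|+1$ unless $u$ is built by iterated complete first returns from the left — so in fact I should be a little careful here and instead directly exhibit the chain: set $v_0=\emptyset$, and inductively let $v_{k+1}$ be the (unique) complete first return to $v_k$ which is a prefix of $\xi$; this exists because $v_k$, being a factor of the aperiodic minimal $\xi$, recurs, and the first recurrence after its initial occurrence gives a complete first return that is a prefix of $\xi$. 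Each $v_{k+1}$ is privileged of strictly larger length, and $|v_{k+1}|\le 2|v_k|$ forces nothing, but the lengths strictly increase so $|v_k|\to\infty$; this gives cofinality without needing the counting lemma at all, only minimality/recurrence. I would present this inductive construction as the heart of the proof, note the uniqueness of the first return prefix, and conclude as above.
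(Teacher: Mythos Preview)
Your final argument is correct and is essentially the paper's own proof: both build the chain $v_0=\emptyset \prec v_1 \prec v_2 \prec \cdots$ of privileged prefixes by taking, at each step, the unique complete first return to $v_k$ that occurs as a prefix (of $\xi$ in your version, of $u$ and then its extension in the paper's), and use strict growth of $|v_k|$ to sandwich any given cylinder. The detour through the counting identity ``$|u|+1$ privileged factors'' is unnecessary and, as you yourself noticed, does not directly give privileged \emph{prefixes}; you should simply present the inductive construction via recurrence (minimality) from the outset and drop that paragraph.
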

\begin{proof}
Fix a word $u \in \Ll$, and let $v_1$ be its first (left) letter.
Consider the complete first return $v_2$ of $v_1$ which is a prefix of $u$.
Let $v_3$ be the complete first return word of $v_2$ which is a prefix of $u$, and so on.
We define this way a finite sequence $v_1, v_2, \cdots v_{p}$ of elements in $\tilde H^{(0)}$, such that \(v_1 \prec v_2 \prec \cdots v_{p-1} \preceq u \prec v_p\).
Identifying the cylinders $[v], v\in \Tt^{(0)}$, with cylinders of $\Xi$, we have the inclusions \([v_{p-1}] \subset [u] \subset [v_{p}]\) which proves the homeomorphism.
\end{proof}

Given two distinct infinite words $\xi, \eta \in \Xi$, we denote by
\[
\begin{array}{cl}
\xi \wedge \eta\in H^{(0)}\,, & \text{\rm the longest common prefix to $\xi$ and $\eta$, and by} \\
\xi \wee \eta\in \tilde H^{(0)}\,, & \text{\rm the longest common privileged prefix to $\xi$ and $\eta$}\,.
\end{array}
\]
Notice that $\xi\wee\eta$ is always a prefix of $\xi \wedge \eta$.

\subsection{Horizontal edges}
\label{pf11.ssect-horizontal}

\begin{defn}
\label{pf11.def-av}
For $v\in\Tt^{(0)}$ define:
\begin{enumerate}[(i)]
\item $a(v)=$  number of one-letter right extensions of $v$ minus one;

\item $\tilde a(v)=$ number of complete first returns to $v$ minus one if $v$ is privileged, and $0$ if $v$ is not privileged.

\end{enumerate}
\end{defn}

Note that $0\leq a(v)\leq |\Aa|-1$, and $a(v)\geq 1$ whenever $v$ is right-special.
By aperiodicity, for all $n$ there is at least one $v$ of length $n$ such that $a(v)\geq 1$.
Aperiodicity also implies that $\tilde a(v)\geq 1$ for all privileged words.
The following relation between the two definitions will be useful later on.
\begin{lemma}
\label{lem-av}
If $u$ is privileged then
\[
\tilde a(u) = \sum_{r\in S(u)} a(r)\,.
\]
In particular $\tilde a(u)$ bounds the number of right-special words in $S(u)$.
\end{lemma}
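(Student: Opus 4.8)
The plan is to set up a bijection between complete first returns to a privileged word $u$ and one-letter right extensions of the right-special words lying "between" $u$ and its complete first returns, and then count. First I would recall the structure: if $u'$ is a complete first return to $u$, then $u$ occurs exactly twice in $u'$, as a prefix and as a suffix, and the two occurrences do not overlap beyond what is forced (or, if they do overlap, aperiodicity still pins down the combinatorics). The key observation is that reading $u'$ from its prefix $u$ towards $u'$, the word $u'$ is determined by the sequence of "branching choices" one makes along the tree $\Tt$: starting at the vertex $u$, one follows the unique path until reaching a right-special word $r$ with $u\preceq r\prec u'$, at which point there is a genuine choice among $a(r)+1$ one-letter extensions, and so on, until the next occurrence of $u$ is completed.

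The main step is to show that the complete first returns to $u$ are in bijection with the following data: a right-special word $r\in S(u)$ together with a choice of a one-letter right extension of $r$, modulo the one choice that "leads back along the old path" and does not produce a genuinely new first return. More precisely, among all infinite paths through $u$, the first place they can diverge from one another (while still being on their way to completing a second copy of $u$) is at some $r\in S(u)$; at $r$ there are $a(r)+1$ continuations, and each of these, continued by the canonical "return to $u$" path, yields a complete first return word, but two of the complete first returns obtained this way — no, rather: exactly one continuation per $r$ is "absorbed" into continuing toward the next right-special word in $S(u)$, so the count of genuinely new complete first returns contributed at level $r$ is $a(r)$. Summing over $r\in S(u)$ gives $\sum_{r\in S(u)} a(r)$, and adding back the one base case ("no branching at all", i.e. $u$ itself extends uniquely back to $u$ — the single path that is common to all of them) accounts for the "$+1$" hidden in $\tilde a(u)=(\#\text{CFRs})-1$. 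So $\tilde a(u)=\sum_{r\in S(u)}a(r)$.

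The hard part, and the step I expect to be the main obstacle, is making the bijection precise when $S(u)$ has more than one element and handling the ordering of right-special words along the path from $u$ to a complete first return: one must verify that every $r\in S(u)$ is comparable (as a prefix) to give a linear order, that each complete first return word passes through a well-defined last right-special word before closing off the copy of $u$, and that the "one absorbed choice per $r$" bookkeeping is correct — this is where aperiodicity and the non-overlapping (or controlled-overlap) of the two copies of $u$ in $u'$ are essential. Once the bijection is established the identity is immediate, and the final sentence of the lemma follows trivially: since $a(r)\geq 1$ for every right-special word $r$ (as noted after Definition~\ref{pf11.def-av}), we have $\#S(u)\leq \sum_{r\in S(u)}a(r)=\tilde a(u)$.
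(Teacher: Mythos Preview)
Your approach is the same as the paper's---it is a tree-counting argument, and the paper in fact gives nothing more than a figure and the sentence ``the proof is rather straightforward'': draw the finite subtree of $\Tt$ rooted at $u$ whose leaves are exactly the complete first returns to $u$; its internal branching vertices are precisely the elements of $S(u)$, each such $r$ having $a(r)+1$ children; the elementary identity
\[
(\text{number of leaves}) \;=\; 1 + \sum_{\text{branching vertices } r} \bigl(\text{children}(r)-1\bigr)
\]
then reads $\tilde a(u)+1 = 1 + \sum_{r\in S(u)} a(r)$.

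One correction to your plan, however: you flag as the ``hard part'' verifying that the elements of $S(u)$ are pairwise comparable under $\prec$, giving a linear order. This is \emph{false} in general (two different extensions of a right-special $r_1\in S(u)$ may each lead to further right-special words $r_2,r_3\in S(u)$ that are incomparable), and your ``exactly one continuation per $r$ is absorbed'' bookkeeping relies on this linear picture. Fortunately the linear order is not needed: the tree identity above holds for arbitrary finite rooted trees, so the argument goes through without it. Likewise, the worry about overlapping copies of $u$ is irrelevant---the subtree is defined purely by prefixes, and a proper prefix $r\prec u'$ of a complete first return contains $u$ only once (as a prefix), since the second occurrence of $u$ in $u'$ is its suffix.
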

\begin{proof} The proof is rather straightforward.
Figure~\ref{pf11.fig-av} illustrates the idea of the proof: the white square stands for a privileged word $u$, the white circles for its complete first returns, and the black circles for the right-special words in $S(u)$.
\begin{figure}[!h]
\begin{center}
\includegraphics[width=9cm]{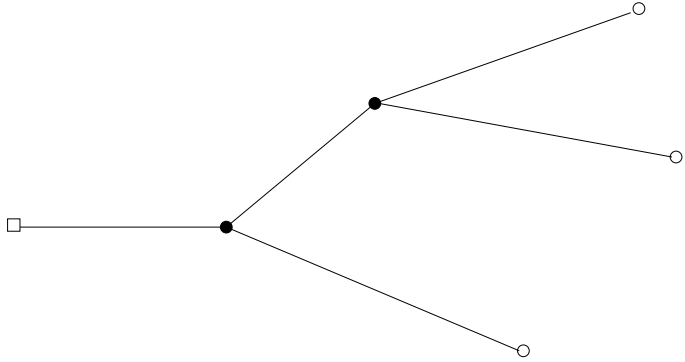}
\end{center}
\caption{\small{Illustration for the sum in Lemma~\ref{lem-av}.}}
\label{pf11.fig-av}
\end{figure}
\end{proof}

\bigskip

The following set has also been used in \cite{KS10}.
\begin{defn}
\label{def-H}
Let $H^{(1)}$ be the set of pairs $(u,v)$ given by distinct one-letter right extensions of the same word (necessarily right-special).
We view these as new edges in the graph $\Tt$ calling them {\em right-special horizontal edges}.
We denote by $u\wedge v$ the corresponding right-special word (the longest common prefix of $u$ and $v$).
\end{defn}
Note that $H^{(1)}$ contains $a(r)(a(r)+1)$ edges with longest common
prefix $r$.
The data $(\Tt^{(0)},\Tt^{(1)}, H^{(1)})$ together with a choice
function and a weight function determine a metric on $\Xi$, as we
recall below, and gave rise to the characterization of power
boundedness in \cite{KS10} in the case of when $\Xi$ has a unique
right-special word per length.

The main new idea in this article is to use another set of horizontal edges.
\begin{defn}
\label{def-tH}
Let $\tilde H^{(1)}$ be the set of pairs $(u,v)$ given by distinct complete first return words of the same privileged word.
We view these as new edges in the graph $\Tt$ calling them {\em privileged horizontal edges}.
We denote by $u\wee v$ the corresponding privileged word (the longest common privileged prefix of $u$ and $v$).
\end{defn}
As for infinite words, $u \wee v$ is always a prefix of $u \wedge v$.

The new general characterization of power freeness will be obtained from the data
$(\Tt^{(0)},\Tt^{(1)}, \tilde H^{(1)})$.

\begin{rem}
\label{pf11.rem-Gambaudo}{\em
The horizontal data $\tilde H^{(0)}$ and $\tilde H^{(1)}$ can be made into a new graph, by adding vertical edges linking a privileged word to any of its complete first returns.
This ``graph of privileged words'' can then be interpreted as a symbolic analogous of a general construction for tilings and Delone sets of $\RM^d$ introduced by Gambaudo {\it et al.} in \cite{BBG06}.
}
\end{rem}

There are natural maps:
\[
\phi^{(0)}:\tilde H^{(0)}\to H^{(0)} \,, \qquad \phi^{(1)}:\tilde H^{(1)}\to H^{(1)}\,,
\]
defined as follows.
Given a privileged word $u$, $\phi^{(0)}(u)$ is the shortest right-special word containing $u$ as a prefix (which, by minimality, always exists).
Given $(u_1,u_2)\in\tilde H^{(1)}$, $u_1\wedge u_2$ is a right-special word and there is a unique one-letter extension $v_i$ of $u_1\wedge u_2$ which is a prefix of $u_i$, $i=1,2$.
We define $\phi^{(1)}((u_1,u_2)) = (v_1,v_2)$.
\begin{lemma}
\label{lem-phi}
The map $\phi^{(0)}$ is always injective.
It is surjective if and only if the subshift is right-special balanced.
For any $(u_1,u_2)\in \tilde H^{(1)}$ we have:
\[
\phi^{(0)}(u_1\wee u_2) = u_1\wedge u_2\,.
\]
Furthermore, if the subshift is right-special balanced then \(a(\phi^{(0)}(u)) = \tilde a(u)\).

The map $\phi^{(1)}$ always surjective.
It is injective if and only if the subshift is right-special balanced.
\end{lemma}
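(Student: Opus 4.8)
The plan is to treat the four assertions about $\phi^{(0)}$ and $\phi^{(1)}$ in turn, exploiting the fact that a privileged word $u$ and its "promotion" $\phi^{(0)}(u)$ sit in the same cylinder, and that the sets $\tilde H^{(0)}$, $\tilde H^{(1)}$ refine $H^{(0)}$, $H^{(1)}$. First I would show that $\phi^{(0)}$ is injective. Suppose $u_1, u_2$ are privileged with $\phi^{(0)}(u_1) = \phi^{(0)}(u_2) = r$. Both $u_1$ and $u_2$ are prefixes of $r$, hence one is a prefix of the other, say $u_1 \preceq u_2$. If $u_1 \neq u_2$ then, reading off the chain of iterated complete first returns leading from $u_1$ up to $u_2$ (Lemma~\ref{pf11.lem-infinitepaths}), there would be a privileged word strictly between $u_1$ and $u_2$, hence also strictly between $u_1$ and $r$; but minimality of $|r|$ among right-special words containing $u_1$ forces $r$ to contain no right-special word strictly between $u_1$ and $r$ — which is compatible — so the real point is that a privileged word $v$ with $u_1 \prec v \preceq r$ would already have $\phi^{(0)}(v) = r$, contradicting $\phi^{(0)}(u_1) = r$ by the definition of $\phi^{(0)}$ as the \emph{shortest} such right-special word only if $v$ is itself right-special; more carefully, since $\phi^{(0)}(u_2)=r$ and $u_1\prec u_2$, the word $r$ is the shortest right-special word with prefix $u_2$, but it also has prefix $u_1$, so $\phi^{(0)}(u_1)\preceq r$; as $\phi^{(0)}(u_1)$ must have $u_1$ as a prefix and be a prefix of $r$ it lies on the path to $r$, and then $u_2\preceq\phi^{(0)}(u_1)$ is impossible unless $u_2 = \phi^{(0)}(u_1)$, i.e. $u_2$ is right-special, whence $u_1 = u_2$. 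I will streamline this once I see which implications the surjectivity condition needs.

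Next, surjectivity of $\phi^{(0)}$: its image is exactly the set of right-special words $r$ such that $r = \phi^{(0)}(u)$ for the privileged prefix $u$ determined by $r$; by definition $\phi^{(0)}(u) = r$ means $r$ is the \emph{shortest} right-special word with $u$ as prefix, i.e. there is no right-special word strictly between $u$ and $r$. So $r$ lies in the image iff there is a privileged word $u \preceq r$ with no right-special word strictly between; running through the characterizations of right-special balanced in the Lemma preceding Definition~\ref{def-H} (condition (ii): between $r$ and its smallest proper right-special extension $r'$ there is a unique privileged word), surjectivity of $\phi^{(0)}$ amounts precisely to: every right-special word $r$ has a privileged word $u$ with $u\preceq r$ and $r$ minimal right-special over $u$, which is equivalent to right-special balancedness. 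The identity $\phi^{(0)}(u_1 \wee u_2) = u_1 \wedge u_2$ follows because, given $(u_1,u_2)\in\tilde H^{(1)}$, the word $w := u_1 \wee u_2$ is privileged, $u_1 \wedge u_2$ is right-special with $w$ as a prefix, and there is no right-special word strictly between $w$ and $u_1\wedge u_2$: any such $s$ would give, via its one-letter extensions toward $u_1$ and toward $u_2$, a branching below $u_1 \wedge u_2$, contradicting that $u_1\wedge u_2$ is the longest common prefix. Then the count $a(\phi^{(0)}(u)) = \tilde a(u)$ in the balanced case is immediate from Lemma~\ref{lem-av}: $\tilde a(u) = \sum_{r \in S(u)} a(r)$, and balancedness means $S(u) = \{\phi^{(0)}(u)\}$ is a singleton.

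For $\phi^{(1)}$: surjectivity follows from surjectivity of the underlying branching data — given $(v_1, v_2) \in H^{(1)}$ with common right-special prefix $r$, I pick any complete first returns $u_1 \succeq v_1$, $u_2 \succeq v_2$ to the privileged word sitting below $r$ (using Lemma~\ref{pf11.lem-infinitepaths} to locate the largest privileged prefix of $r$, then a complete first return of it extending past $r$ in the direction of each $v_i$), getting $(u_1,u_2)\in\tilde H^{(1)}$ with $\phi^{(1)}((u_1,u_2)) = (v_1,v_2)$; I need to check the largest privileged prefix $w$ of $r$ satisfies $w \prec r$ strictly, which holds because $r$ is right-special and a privileged word equal to $r$ would make $\phi^{(0)}(w)=w=r$ — allowed — so instead I argue directly that the complete first returns of $w$ branch exactly at $r = u_1\wedge u_2$ when $\phi^{(0)}(w)=r$. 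Injectivity of $\phi^{(1)}$ fails precisely when some privileged word $u$ has $|S(u)| \geq 2$: two distinct right-special words $r_1 \neq r_2$ in $S(u)$ yield complete first returns $u_1', u_2'$ of $u$ branching at $r_1$ and at $r_2$ respectively, and $\phi^{(1)}$ collapses the branching data back down to the one-letter extensions, but there can be two pairs mapping to the same $(v_1,v_2)$ when the branching occurs at more than one right-special level; conversely balancedness ($|S(u)|=1$ for all privileged $u$) makes the right-special word $u_1\wedge u_2$ and its one-letter extensions determine $(u_1,u_2)$ uniquely as a pair of complete first returns. \textbf{The main obstacle} I anticipate is making the "between" arguments fully rigorous — tracking which privileged and right-special words can occur strictly between two given words along a path in $\Tt$ — since this is where aperiodicity and the no-privileged-word-strictly-between lemma (from the proof of the right-special balanced equivalence) do the real work, and it is easy to conflate "shortest right-special extension" with "next privileged word". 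I would prove a small auxiliary claim isolating exactly this, then feed it into all four parts.
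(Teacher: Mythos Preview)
Your outline is broadly in the spirit of the paper (whose proof is extremely terse: it declares the $\phi^{(0)}$ statements ``obvious'' and treats only $\phi^{(1)}$ in any detail), but there is one genuine gap and one place where the paper's route is cleaner than yours.

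\textbf{The gap.} Your argument for the identity $\phi^{(0)}(u_1\wee u_2)=u_1\wedge u_2$ does not work. You claim that a right-special word $s$ with $w:=u_1\wee u_2\preceq s\prec u_1\wedge u_2$ would force ``a branching below $u_1\wedge u_2$'' between $u_1$ and $u_2$. But the one-letter extension of $s$ in the direction of $u_1$ \emph{coincides} with the one in the direction of $u_2$ (since $s\prec u_1\wedge u_2$); the extra extension(s) witnessing that $s$ is right-special can perfectly well lead to words of $\Ll$ that are prefixes of neither $u_1$ nor $u_2$. In fact the identity appears to fail outside the right-special balanced case: take a privileged $w$ with three complete first returns $u_1',u_2',u_3'$ and $S(w)=\{r_1,r_2\}$, $r_1\prec r_2$, where $u_1'$ branches off at $r_1$ and $u_2',u_3'$ split only at $r_2$. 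Then $(u_2',u_3')\in\tilde H^{(1)}$, $u_2'\wee u_3'=w$, $u_2'\wedge u_3'=r_2$, but $\phi^{(0)}(w)=r_1\neq r_2$. So either read the identity as conditional on balancedness (which is how it is actually used later, in Propositions~\ref{pf11.prop-apgraph} and~\ref{prop-old-new}), or prove only the inequality $\phi^{(0)}(u_1\wee u_2)\preceq u_1\wedge u_2$, which \emph{is} immediate. Your injectivity argument for $\phi^{(0)}$ is also tangled; the clean version is: if $u_1\prec u_2$ are privileged with $\phi^{(0)}(u_1)=\phi^{(0)}(u_2)=r$, pick a complete first return $u_1'$ of $u_1$ with $u_1\prec u_1'\preceq u_2$; since $\tilde a(u_1)\ge 1$ the returns to $u_1$ branch at some right-special $s$ with $u_1\preceq s\prec u_1'\preceq r$, contradicting minimality of $r$.

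\textbf{A cleaner route for $\phi^{(1)}$.} For the implication ``balanced $\Rightarrow$ $\phi^{(1)}$ injective'' the paper does not argue directly as you do. Instead it uses a counting argument: once you know $\phi^{(1)}$ is surjective and (in the balanced case) $a(\phi^{(0)}(u))=\tilde a(u)$, the fibres of $\phi^{(1)}$ over edges with a fixed right-special branch point $r=\phi^{(0)}(u)$ have the same finite cardinality on both sides, so surjectivity forces bijectivity. This avoids having to track which complete first return corresponds to which one-letter extension. For the converse (``not balanced $\Rightarrow$ not injective'') the paper gives exactly the $r_1\prec r_2$ picture above, producing three returns $u_1',u_2',u_3'$ with $\phi^{(1)}((u_1',u_2'))=\phi^{(1)}((u_1',u_3'))$; your sketch gestures at this but does not isolate the explicit collision.
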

\begin{proof} The statements concerning $\phi^{(0)}$ are obvious.

That right-special balanced implies injectivity is a simple counting argument following from the fact that $a(\phi^{(0)}(u))=\tilde a(u)$ in that case.
As for the converse, if $S(u)$ contains two distinct $r_1,r_2$ then it must contain two distinct $r_1,r_2$ with $r_1\prec r_2$.
It follows that there are distinct complete first returns $u'_1,u'_2,u'_3$ of $u$ such that $r_1$ is the longest common prefix of them all but $r_2$ is the longest common prefix of $r_2$ and $r_3$ only.
It follows that $\phi^{(1)}((u_1,u_2))=\phi^{(1)}((u_1,u_3))$.
\end{proof}

\medskip

An important technical point for this paper is the following lemma: it says that the set of privileged words keeps track of the combinatorics of powers in the subshift.
\begin{lemma}
\label{pf11.lem-powers}
Consider a word $u \in \Ll$.
If there exists an integer $p\ge 2$ such that $u^p\in\Ll$, then there are $p$ non-empty privileged words \(v_1, v_2, \cdots v_p\), and a prefix $\tilde{u}$ of $u$, satisfying
\begin{enumerate}[(i)]

\item $u^p$ is a proper prefix of $v_p$,

\item \(v_j = u^j \tilde{u}\), for \(j=1, 2, \cdots p-1\),

\item $v_{j+1}$ is a complete first return to $v_j$, for \(j=1, 2, \cdots p-1\).

\end{enumerate}
\end{lemma}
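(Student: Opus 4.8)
The plan is to prove this by induction on $p$, using at each stage the characterization of privileged words as iterated complete first returns together with the overlap structure forced by the power $u^p$. The base case $p=2$ is essentially the content of the overlap argument already seen in the proof of Lemma~\ref{pf11.rem-powers}: from $u^2 \in \Ll$ one produces, following an infinite word extending $u^2$, a privileged word $v_1$ which is a complete first return to the first letter of $u$ and satisfies $v_1 = u\tilde u$ for some prefix $\tilde u$ of $u$; one then takes for $v_2$ the complete first return to $v_1$ that is a prefix of a fixed infinite extension of $u^2$, and checks that $u^2$ is a proper prefix of $v_2$.

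\medskip

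First I would set up the inductive step carefully. Assume the statement for $p$ and suppose $u^{p+1} \in \Ll$. Since $u^{p+1} \in \Ll$ in particular $u^p \in \Ll$, so by the inductive hypothesis we obtain privileged words $v_1, \dots, v_p$ and a prefix $\tilde u$ of $u$ with $v_j = u^j \tilde u$ for $j \le p-1$, with $v_{j+1}$ a complete first return to $v_j$, and with $u^p$ a proper prefix of $v_p$. The key observation is that, because $v_{p-1} = u^{p-1}\tilde u$ occurs as a prefix of $u^p$ and again (shifted by $|u|$) inside $u^{p+1}$ — using that $\tilde u$ is a prefix of $u$ so $u^{p-1}\tilde u$ is a prefix of $u^p$ which is a prefix of $u^{p+1}$, and $u \cdot u^{p-1}\tilde u = u^p \tilde u$ is also a factor of $u^{p+1}$ provided $|u^p\tilde u| \le |u^{p+1}|$, i.e.\ always — the word $v_p$, being the complete first return to $v_{p-1}$ occurring at the start, must in fact satisfy $v_p = u^p \tilde u$: the next occurrence of $v_{p-1} = u^{p-1}\tilde u$ after the initial one cannot come later than the occurrence starting at position $|u|$, and aperiodicity together with minimality rules out it coming earlier or the returns being shorter. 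This pins down $v_p = u^p\tilde u$, which is clause (ii) for the index $p$ now included in the range $1,\dots,p$.

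\medskip

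Then I would define $v_{p+1}$ to be the complete first return to $v_p$ which is a prefix of a fixed infinite word $\xi \in \Xi$ having $u^{p+1}$ as a prefix (such $\xi$ exists since $u^{p+1}\in\Ll$ and the subshift is a right-infinite subshift, or one passes to $\Xi^+$). This is a privileged word of order one more than that of $v_p$, clause (iii) holds by construction, and it remains to show clause (i): that $u^{p+1}$ is a \emph{proper} prefix of $v_{p+1}$. For this I would argue that $v_p = u^p\tilde u$ occurs at position $0$ of $\xi$ and again at position $|u|$ (since $u^{p+1}$ is a prefix of $\xi$ and $u\cdot u^p\tilde u = u^{p+1}\tilde u$... wait — one must check $u^{p+1}\tilde u$ is a factor, which needs $|u^{p+1}\tilde u| \le$ length available; since $\xi$ is infinite this is automatic as long as $u^{p+1}\tilde u \in \Ll$). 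The cleaner route: since $v_p$ is a prefix of $u^{p+1}$ and $v_p = u^p\tilde u$ has $|v_p| = p|u| + |\tilde u| < (p+1)|u| \le |u^{p+1}|$, the word $v_p$ occurs at position $0$ and, because $u^{p+1}$ contains $u$ as a prefix, also at position $|u|$ inside $u^{p+1}$ — hence $v_{p+1}$, the \emph{first} complete return, ends no later than position $|u| + |v_p|$ and starts at $0$, and in any case $v_{p+1}$ strictly contains $u^{p+1}$ because the second occurrence of $v_p = u^p\tilde u$ at position $|u|$ forces $v_{p+1}$ to reach at least to position $|u| + p|u| + |\tilde u| = (p+1)|u| + |\tilde u| > |u^{p+1}|$. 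Combined with $v_{p+1}$ being a prefix of $\xi$ whose prefix is $u^{p+1}$, we get that $u^{p+1}$ is a proper prefix of $v_{p+1}$.

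\medskip

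The main obstacle I anticipate is the bookkeeping in the inductive step: one must be careful that the \emph{same} prefix $\tilde u$ works for all indices $1,\dots,p+1$ (it does, because it is determined already at the base case as the prefix of $u$ of length $|v_1| - |u|$, and the overlap arguments only reconfirm it), and one must invoke aperiodicity and minimality at the right moments to exclude $v_j$ being shorter than $u^j\tilde u$ — this is exactly where a periodic word would fail, as then the "return" could be $u$ itself. I would also remark that it suffices to prove the lemma for $\Xi^+$, since $\Xi$, $\Xi^+$ and $\Xi^-$ share the same language and privileged words are defined purely language-theoretically; this is implicitly how the tree-of-words setup of Section~\ref{pf11.sect-treegraph} is used. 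Everything else is elementary combinatorics on words of the type already deployed in Lemma~\ref{pf11.rem-powers} and Lemma~\ref{pf11.lem-infinitepaths}.
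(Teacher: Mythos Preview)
Your approach (upward induction on $p$) is genuinely different from the paper's, which goes \emph{top-down}: it defines $v_p$ directly as the shortest privileged proper extension of $u^p$, takes $v_{p-1}$ to be its privileged predecessor, writes $v_{p-1}=u^k\tilde u$, and shows $k=p-1$ by exhibiting a \emph{single} extra occurrence of $v_{p-1}$ inside $v_p$ when $k<p-1$ (namely at position $|u|$), contradicting the complete-first-return property. It then descends, using that $v_{p-2}$ is also a suffix of $v_{p-1}$ to pin down the same $\tilde u$ at every level. The advantage of descending is that one only ever needs to \emph{produce} a forbidden occurrence, never to \emph{exclude} all intermediate ones.

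Your inductive step, by contrast, has two concrete problems. First, the $v_p$ supplied by the inductive hypothesis is merely \emph{some} complete first return to $v_{p-1}$ with $u^p$ as a proper prefix; it need not be a prefix of your chosen $\xi$ with $u^{p+1}\preceq\xi$, so you cannot simply declare it equal to $u^p\tilde u$. You implicitly redefine $v_p$ as the complete first return along $\xi$, but then to get $v_p=u^p\tilde u$ you must \emph{rule out} every occurrence of $v_{p-1}$ at a position $0<j<|u|$. Your appeal to ``aperiodicity together with minimality'' does not do this; what actually works is the complete-first-return structure already in hand (an occurrence of $v_{p-1}$ at such a $j$ would force $v_{p-2}$ to occur at position $j$ inside $v_{p-1}$, contradicting that $v_{p-2}$ occurs in $v_{p-1}$ only at $0$ and $|u|$). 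Second, your argument for clause~(i) asserts that $v_p=u^p\tilde u$ occurs at position $|u|$ \emph{inside} $u^{p+1}$; but $|u|+|v_p|=(p+1)|u|+|\tilde u|>|u^{p+1}|$ whenever $\tilde u$ is non-empty, so this occurrence simply does not fit. What you actually need is only a \emph{lower} bound on the second occurrence of $v_p$ in $\xi$ (namely that it lies beyond position $|u|-|\tilde u|$), and this again follows from the exclusion argument above rather than from any occurrence at $|u|$.

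In short: your scheme can be repaired, but the repairs are exactly the combinatorial exclusions you waved away, and the paper's descending construction sidesteps them entirely.
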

\begin{proof}
Let $v_p$ be the shortest privileged proper extension of $u^p$, and let $v_{p-1}$ be the (unique) privileged word whose complete first return is $v_p$.
By minimality of $|v_p|$, $v_{p-1}$ is a prefix of $u^p$, so we have \(v_{p-1} \preceq u^p \prec v_p\).
Hence there is a prefix $\tilde{u}$ of $u$ such that $v_{p-1}=u^k\tilde{u}$ for some $k\le p-1$.
If $k<p-1$, then the first complete first return to $v_{p-1}$, {\it i.e.} $v_p$, would be shorter than $u^p$, a contradiction. Thus we have $v_{p-1} = u^{p-1} \tilde{u}$.

Consider now the (unique) privileged word $v_{p-2}$ whose complete first return is $v_{p-1}$.
The same reasoning, namely that its first complete first return $v_{p-1}$ must be longer than $u^{p-1}$, shows that $v_{p-2} = u^{p-2} \tilde{u}'$, for some prefix $\tilde{u}'$ of $u$.
But $v_{p-2}$ is also a suffix of $v_{p-1}$, and hence $\tilde{u}'=\tilde{u}$.
And we complete the proof with a finite induction.
\end{proof}

\subsection{Approximation graphs}
\label{pf11.ssect-approxgraph}

We consider a minimal and aperiodic right-infinite subshift $\Xi$ with language $\Ll$ over a finite alphabet, and its tree of words $\Tt=(\Tt^{(0)},\Tt^{(1)})$ and the horizontal structures $H$ and $\tilde H$ as defined in the previous Sections~\ref{pf11.ssect-tree} and~\ref{pf11.ssect-horizontal}.

\begin{defn}
\label{pf11.def-choices}
A {\em choice function} is a map \( \tau : \Tt^{(0)} \rightarrow \Pi_\infty\) which satisfies
\begin{enumerate}[(i)]
\item $\tau(v)$ goes through $v$,
\item If $\tau(v)$ goes through $w$, with $|w|>|v|$, then $\tau(w) = \tau(v)$.
\end{enumerate}
\end{defn}
Given a choice function $\tau$ we define the {\em approximation graphs} \(\Gamma_\tau = (V,E)\) and \(\tilde\Gamma_\tau = (\tilde V,\tilde E)\)
by
\[
V = \tau(H^{(0)})\,, \qquad  \qquad
E =  \bigl\{ \bigl(\tau(u) ,\tau(v) \bigr) \, : \, (u,v) \in H^{(1)} \bigr\} \,,
\]
and
\[
\tilde V = \tau(\tilde H^{(0)})\,, \qquad \qquad
\tilde E =  \bigl\{ \bigl(\tau(u) ,\tau(v) \bigr) \, : \, (u,v) \in \tilde H^{(1)} \bigr\} \,.
\]
Given an edge $e=(\xi,\eta)$ in $E$ or $\tilde E$, we write $s(e)=\xi$ and $r(e)=\eta$ for its source and range vertices, and $e^\op = (\eta,\xi)$ for its opposite edge.

Notice that $\Gamma_\tau$ and $\tilde \Gamma_\tau$ are both connected graphs.

The graph $\Gamma_\tau$ was introduced in \cite{KS10}.
For the class of subshifts studied in \cite{KS10}, the two graphs are the same.
\begin{prop}
\label{pf11.prop-apgraph}
If the subshift is right-special balanced then $\Gamma_\tau=\tilde\Gamma_\tau$.
\end{prop}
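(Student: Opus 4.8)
The plan is to prove both inclusions $V=\tilde V$ and $E=\tilde E$ by exploiting the two maps $\phi^{(0)}\colon\tilde H^{(0)}\to H^{(0)}$ and $\phi^{(1)}\colon\tilde H^{(1)}\to H^{(1)}$ from Lemma~\ref{lem-phi}. Under the right-special balanced hypothesis, Lemma~\ref{lem-phi} tells us that $\phi^{(0)}$ is a bijection from privileged words onto right-special words and $\phi^{(1)}$ is a bijection from privileged horizontal edges onto right-special horizontal edges. So the strategy is simply to check that the choice function $\tau$ intertwines these bijections with the identity on $\Pi_\infty$, i.e.\ that $\tau(u)=\tau(\phi^{(0)}(u))$ for every privileged word $u$ and $(\tau(u_1),\tau(u_2))=(\tau(v_1),\tau(v_2))$ whenever $\phi^{(1)}((u_1,u_2))=(v_1,v_2)$.

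The key observation making this work is the compatibility property in Definition~\ref{pf11.def-choices}(ii): if $\tau(v)$ passes through a longer word $w$, then $\tau(w)=\tau(v)$. First I would treat the vertices. Given a privileged word $u$, its image $\phi^{(0)}(u)$ is by definition the shortest right-special word containing $u$ as a prefix; in particular $\phi^{(0)}(u)$ is an extension of $u$, so the infinite path $\tau(u)$ passes through $u$ and we want $\tau(\phi^{(0)}(u))=\tau(u)$. This is not immediate from (ii) directly, because (ii) concerns the value of $\tau$ on words the chosen path passes through, whereas $\tau(u)$ need not pass through $\phi^{(0)}(u)$. The correct argument is: consider $\tau(u)$; let $w$ be the word of length $|\phi^{(0)}(u)|$ through which $\tau(u)$ passes. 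Then $w$ is an extension of $u$ of that length, and since $\phi^{(0)}(u)$ is the \emph{unique} right-special word of its length extending $u$ only if that length is small enough --- so instead I would argue via the sequence of prefixes: $\tau(u)$ passes through all of $u$'s extensions along that path, and by (ii) applied to each, $\tau$ is constant equal to $\tau(u)$ on all words of length $\geq|u|$ through which $\tau(u)$ passes. The word $\phi^{(0)}(u)$, being a right-special word extending $u$, need to be shown to lie on this path: but in fact $V=\tau(H^{(0)})$ and $\tilde V=\tau(\tilde H^{(0)})$, and since $\phi^{(0)}$ is a bijection $\tilde H^{(0)}\to H^{(0)}$, it suffices to show $\tau(u)=\tau(\phi^{(0)}(u))$. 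Here I would use that $u\preceq\phi^{(0)}(u)$ so $\tau(\phi^{(0)}(u))$ passes through $u$ (by Definition~\ref{pf11.def-choices}(i) it passes through $\phi^{(0)}(u)$, hence through its prefix $u$), and then apply Definition~\ref{pf11.def-choices}(ii) to $\tau(\phi^{(0)}(u))$ with $v=u$ and $w=\phi^{(0)}(u)$: since $|\phi^{(0)}(u)|>|u|$ (or they are equal, the trivial case), we get $\tau(u)=\tau(\phi^{(0)}(u))$. This shows $\tilde V=\tau(\tilde H^{(0)})=\tau(\phi^{(0)}(\tilde H^{(0)}))=\tau(H^{(0)})=V$.

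For the edges, the argument is analogous. Take $(u_1,u_2)\in\tilde H^{(1)}$ with $\phi^{(1)}((u_1,u_2))=(v_1,v_2)\in H^{(1)}$. By construction $v_i$ is a one-letter extension of $u_1\wedge u_2$ that is a prefix of $u_i$; also $u_1\wee u_2$ is a privileged word which is a prefix of both $u_i$, and $\phi^{(0)}(u_1\wee u_2)=u_1\wedge u_2$. I claim $\tau(u_i)=\tau(v_i)$ for $i=1,2$. Indeed, $v_i\preceq u_i$, so $\tau(u_i)$ passes through $v_i$, and by Definition~\ref{pf11.def-choices}(ii) with $v=v_i$ and $w=u_i$ (using $|u_i|\geq|v_i|$, and if equal it is trivial) we get $\tau(v_i)=\tau(u_i)$. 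Hence the edge $(\tau(u_1),\tau(u_2))\in\tilde E$ equals $(\tau(v_1),\tau(v_2))\in E$. Since $\phi^{(1)}$ is a bijection $\tilde H^{(1)}\to H^{(1)}$ by Lemma~\ref{lem-phi} (using right-special balanced), this gives $\tilde E=E$. Combining with $\tilde V=V$ yields $\tilde\Gamma_\tau=\Gamma_\tau$.

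The main obstacle is making precise the claim that $\tau(u)=\tau(\phi^{(0)}(u))$ and the analogous edge statement --- i.e.\ correctly chaining the two conditions in the definition of a choice function so that one reads them in the right direction (from the longer word's chosen path down to shorter prefixes, or the reverse). Once the correct reading is fixed, everything reduces to the bijectivity statements of Lemma~\ref{lem-phi}, which we are entitled to use. I would write it up as: prove $\tau(u)=\tau(\phi^{(0)}(u))$ as a lemma-free inline observation, then conclude $\tilde V=V$ and $\tilde E=E$ in two short paragraphs.

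\begin{proof}
Assume the subshift is right-special balanced. By Lemma~\ref{lem-phi}, $\phi^{(0)}$ is a bijection from $\tilde H^{(0)}$ onto $H^{(0)}$ and $\phi^{(1)}$ is a bijection from $\tilde H^{(1)}$ onto $H^{(1)}$.

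We first record the following: for every privileged word $u$ one has $\tau(u)=\tau(\phi^{(0)}(u))$. Indeed, $\phi^{(0)}(u)$ is the shortest right-special word having $u$ as a prefix, so $u\preceq\phi^{(0)}(u)$. If $u=\phi^{(0)}(u)$ this is trivial. Otherwise $|\phi^{(0)}(u)|>|u|$, and by Definition~\ref{pf11.def-choices}(i) the path $\tau(\phi^{(0)}(u))$ goes through $\phi^{(0)}(u)$, hence through its prefix $u$. Applying Definition~\ref{pf11.def-choices}(ii) to $\tau(\phi^{(0)}(u))$ with $v=u$ and $w=\phi^{(0)}(u)$ gives $\tau(u)=\tau(\phi^{(0)}(u))$.

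For the vertices we then compute
\[
\tilde V=\tau(\tilde H^{(0)})=\bigl\{\tau(u):u\in\tilde H^{(0)}\bigr\}
=\bigl\{\tau(\phi^{(0)}(u)):u\in\tilde H^{(0)}\bigr\}
=\tau\bigl(\phi^{(0)}(\tilde H^{(0)})\bigr)=\tau(H^{(0)})=V,
\]
where the second equality uses the observation above and the next-to-last uses the surjectivity of $\phi^{(0)}$.

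For the edges, let $(u_1,u_2)\in\tilde H^{(1)}$ and write $\phi^{(1)}((u_1,u_2))=(v_1,v_2)$. By definition of $\phi^{(1)}$, $v_i$ is a one-letter right extension of the right-special word $u_1\wedge u_2$ which is a prefix of $u_i$, so $v_i\preceq u_i$ for $i=1,2$. If $v_i=u_i$ then $\tau(v_i)=\tau(u_i)$ trivially; otherwise $|u_i|>|v_i|$, the path $\tau(u_i)$ goes through $u_i$ hence through its prefix $v_i$, and Definition~\ref{pf11.def-choices}(ii) applied with $v=v_i$ and $w=u_i$ yields $\tau(v_i)=\tau(u_i)$. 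Therefore the edge $(\tau(u_1),\tau(u_2))$ of $\tilde E$ coincides with the edge $(\tau(v_1),\tau(v_2))$ of $E$. Hence
\[
\tilde E=\bigl\{(\tau(u_1),\tau(u_2)):(u_1,u_2)\in\tilde H^{(1)}\bigr\}
=\bigl\{(\tau(v_1),\tau(v_2)):(v_1,v_2)\in\phi^{(1)}(\tilde H^{(1)})\bigr\}
=\bigl\{(\tau(v_1),\tau(v_2)):(v_1,v_2)\in H^{(1)}\bigr\}=E,
\]
using the surjectivity of $\phi^{(1)}$ in the last step. Since $\tilde V=V$ and $\tilde E=E$, we conclude $\tilde\Gamma_\tau=\Gamma_\tau$.
\end{proof}
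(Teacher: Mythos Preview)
Your overall strategy matches the paper's: use the bijections $\phi^{(0)}$ and $\phi^{(1)}$ from Lemma~\ref{lem-phi} and show that $\tau$ is compatible with them. However, there is a genuine gap in how you apply the defining property~(ii) of a choice function.

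Property~(ii) says: \emph{if $\tau(v)$ goes through $w$ with $|w|>|v|$, then $\tau(w)=\tau(v)$}. To conclude $\tau(u)=\tau(\phi^{(0)}(u))$ with $v=u$ and $w=\phi^{(0)}(u)$, you must verify that $\tau(u)$ goes through $\phi^{(0)}(u)$. What you actually verify is that $\tau(\phi^{(0)}(u))$ goes through $u$ --- which is automatic for any prefix and does \emph{not} allow you to invoke~(ii). (A simple counterexample to the inference you use: if $v$ is right-special with one-letter extensions $va$ and $vb$ and $\tau(v)$ goes through $va$, then $\tau(vb)$ goes through $v$, yet $\tau(vb)\neq\tau(v)$.) The same misapplication occurs in your edge argument when you write ``$\tau(u_i)$ goes through $v_i$'' and then apply~(ii) with $v=v_i$, $w=u_i$.

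The fix is short but essential: you must argue that the path $\tau$ of the \emph{shorter} word is forced to pass through the \emph{longer} one because there is no branching in between. For vertices this is immediate from the definition of $\phi^{(0)}$: since $\phi^{(0)}(u)$ is the \emph{shortest} right-special word having $u$ as a prefix, there is no right-special word $r$ with $u\preceq r\prec\phi^{(0)}(u)$, so every infinite path through $u$ --- in particular $\tau(u)$ --- necessarily passes through $\phi^{(0)}(u)$; now~(ii) applies correctly. For edges you need the right-special balanced hypothesis: with $u=u_1\wee u_2$, the set $S(u)$ contains only $u_1\wedge u_2$, so no right-special word lies in the interval $v_i\preceq r\prec u_i$ (any such $r$ would belong to $S(u)$ but have $|r|>|u_1\wedge u_2|$). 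Hence $\tau(v_i)$ must pass through $u_i$, and~(ii) gives $\tau(u_i)=\tau(v_i)$. With this correction your proof goes through and is essentially the paper's argument spelled out; note incidentally that the paper observes $V=\tilde V$ holds for \emph{all} subshifts, not just right-special balanced ones.
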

\begin{proof}
For all subshifts, $\Gamma_\tau$ and $\tilde\Gamma_\tau$ have the same vertices.
We need to show that for all $(u_1,u_2)\in\tilde H^{(1)}$ there are $(v_1,v_2)\in H^{(1)}$ such that $\tau(u_i) = \tau(v_i), i=1,2,$ and vice versa.
By Lemma~\ref{lem-phi}, $\phi^{(1)}$ induces a bijection between the two types of horizontal edges.
By the second property of choice functions we have \((\tau\times\tau)\circ\phi^{(1)} = \tau\times\tau\).
\end{proof}
\bigskip

We now introduce a weight function which will be used to define a
metric on the graphs.
\begin{defn}
\label{def-weight}
A {\em weight function} is a strictly decreasing function $\delta:\ZM \to\RM^+$
which tends to $0$ at infinity and for which there exist constants
$\overline{c},\underline{c}>0$ such that
\begin{enumerate}[(i)]

\item $\delta(ab)\leq \overline{c}\delta(a)\delta(b)$,

\item $\delta(2a)\geq \underline{c}\delta(a)$.

\end{enumerate}
\end{defn}
Our characterization will not depend on the choice of weight function.
So the reader may simply choose one so that $\delta(n) = \frac{1}{n+1}$ for $n\in\NM$ to get the usual word metric below in Remark~\ref{pf11.rem-approxgraph} (ii).

Given a weight function $\delta$ we associate the following length to the horizontal edges:
\[
l((u,v)) = \left\{ \begin{array}{ll}
 \delta(|u\wedge v|)\, & (u,v)\in H^{(1)} \,,\\
 \delta(|u \wee v|)\, & (u,v)\in \tilde H^{(1)}\,.
\end{array} \right.
\]
We have the following elementary inequalities, on $H^{(0)}$ and $\tilde H^{(1)}$ respectively:
\[
\delta \circ \phi^{(0)}\leq \delta\,, \qquad \text{\rm and } \qquad l \circ \phi^{(1)} \leq l \,.
\]

The length function allows us to define a graph metric on $\Gamma_\tau$ and $\tilde\Gamma_\tau$:
\[
d_g (\xi,\eta) = \inf \sum_{j=1}^n l(e_j)\,, \  \xi, \eta \in V\,, \qquad
\tilde d_g (\xi,\eta) = \inf \sum_{j=1}^n l(e_j)\,, \  \xi, \eta \in \tilde V\,,
\]
the infimum running over all (finite) sequences $(e_j)_{1\leq j\leq n}$ of edges in $E$ or $\tilde E$ such that \(s(e_1)=\xi, \cdots r(e_j)=s(e_{j+1}), \cdots r(e_n)=\eta\).

\begin{rem}
\label{pf11.rem-approxgraph}
\begin{enumerate}[(i)]
\item We call $\Gamma_\tau$ and $\tilde\Gamma_\tau$ approximation graphs because $V$ and $\tilde V$ are dense in $\Xi$, and $E$ and $\tilde E$ encode neighboring infinite words.

Indeed, since $\tau$ picks an infinite word for each cylinder $[v]$, $v$ in $H^{(0)}$ or $\tilde H^{(0)}$, {\it i.e.} for each basis clopen set for the topology of $\Xi$ by Lemma~\ref{pf11.lem-infinitepaths}, we see that $V$ and $\tilde V$ are dense in $\Xi$. 
Now given $e = (\xi, \eta)$ in $E$ or $\tilde E$, both $\xi$ and $\eta$ belong to the cylinder $[\xi \wedge \eta]$ or $[\xi \wee \eta]$, and can thus be considered ``neighbors'' (see the next item).

\item The function $\delta$ allows us to define metrics $d$ and $\tilde d$ on $\Xi$ as follows:
\begin{equation}
\label{pf11.eq-metric}
d(\xi, \eta) = \left\{ \begin{array}{ll} \delta(|\xi\wedge \eta|) & \text{\rm if } \xi \neq\eta \,, \\
0 & \text{\rm if } \xi = \eta \,. \end{array} \right. \qquad
\tilde d(\xi, \eta) = \left\{ \begin{array}{ll} \delta(|\xi\wee \eta|) & \text{\rm if } \xi \neq\eta \,, \\
0 & \text{\rm if } \xi = \eta \,. \end{array} \right.
\end{equation}
Notice that $d$ and $\tilde d$ actually define ultrametrics on $\Xi$.
Now $x\wee y$ is always a prefix of $x\wedge y$, so we have
\[
d(\xi, \eta) \le \tilde d(\xi, \eta)\,,\quad  \forall \xi,\eta \in \Xi\,,
\]
and
\[
d(\xi, \eta) \le  d_g(\xi, \eta)\,, \quad \text{\rm and } \quad
\tilde d(\xi, \eta) \le \tilde d_g(\xi, \eta)\,,\qquad \xi,\eta \in  V=\tilde V\,.
\]

\end{enumerate}
\end{rem}

\section{Metrics}
\label{pf11.sect-metric}

\subsection{Metrics associated to the approximation graphs}
\label{pf11.ssect-metrics}

The construction given in \cite{KS10} of a metric on the subshift
space followed the recipes of spectral triples. Indeed, the length
function on the edges the graph $\Gamma_\tau$ gives rise
to a spectral triple so that the famous Connes-formula yields a
metric (the spectral distance) which extends to $\Xi$.
The situation is analogous with $\tilde \Gamma_\tau$ as we now show.
\begin{defn}
\label{pf11.def-metrics}
We define two metrics on $\Xi$:
the metric $d_\tau$ given by:
\begin{equation}\label{def-m1}
\dtau (\xi, \eta) = \sup_{ f\in C(\Xi)} \bigl\{ |f(\xi) - f(\eta)| \, :\,
\forall e\in E, \; |f(s(e))-f(r(e))| \le l(e)
\bigr\}\,,
\end{equation}
and the metric $\tilde d_\tau$ given by:
\begin{equation}\label{def-m2}
\tdtau (\xi, \eta) = \sup_{ f\in C(\Xi)} \bigl\{ |f(\xi) - f(\eta)| \, :\,
\forall e\in \tilde E, \; |f(s(e))-f(r(e))| \le l(e)
\bigr\}.
\end{equation}
\end{defn}

Given an infinite word $\xi \in \Xi$, we denote by $\xi_n$ its $n$-th right-special prefix, and by $\tilde \xi_n$ its $n$-th order privileged prefix.
We define
\[
\btau(\xi_n) = \left\{ \begin{array}{cl}  1 & \text{\rm if } \tau(\xi_n) \wedge \xi = \xi_n \,,\\
0 & \text{\rm else} \,,
\end{array} \right. \qquad \text{\rm and } \qquad
\tbtau(\tilde\xi_n) = \left\{ \begin{array}{cl}  1 & \text{\rm if } \tau(\tilde\xi_n) \wee \xi = \tilde\xi_n \,,\\
0 & \text{\rm else} \,,
\end{array} \right.
\]
which we use to provide explicit formulas for $\dtau$ and $\tilde\dtau$.
\begin{lemma}
\label{pf11.lem-specdist}
The metrics $\dtau$ and $\tdtau$ are extensions of the graph metrics $d_g$ and $\tilde d_g$, on $\Gamma_\tau$ and $\tilde\Gamma_\tau$, respectively.
For $\xi,\eta\in \mbox{\rm Im} (\tau)$ they are given by
\begin{equation}
\label{eq-dtau}
d_\tau(\xi, \eta) = \delta(|\xi\wedge\eta|) +
\sum_{n>|\xi\wedge\eta|} \btau(\xi_n) \delta(|\xi_n|)
+ \sum_{n>|\xi\wedge\eta|} \btau(\eta_n) \delta(|\eta_n|)\,,
\end{equation}
\begin{equation}
\label{eq-tdtau}
\tdtau(\xi, \eta) = \delta(|\xi\wee\eta|) +
\sum_{n> \ind(\xi\wee\eta)} \tbtau(\tilde \xi_n) \delta(|\tilde \xi_n|)
+ \sum_{n>\ind(\xi\wee\eta)} \tbtau(\tilde \eta_n) \delta(|\tilde \eta_n|)\,,
\end{equation}
where $\ind(\xi\wee\eta)$ is the order of
\(\xi\wee\eta\) ({\it i.e.} \(\ind(\xi\wee\eta)=m \iff \xi_m=\eta_m=\xi\wee\eta\)).

If $\dtau$ or $\tdtau$ is continuous then the corresponding formula extends to any \(\xi, \eta\in \Xi\).
\end{lemma}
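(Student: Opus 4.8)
The plan is to treat the two formulas in complete parallel; since the structure of $\Gamma_\tau$ with right-special horizontal edges is formally identical to that of $\tilde\Gamma_\tau$ with privileged horizontal edges (tree vertices indexed by a nested family of special prefixes, horizontal edges joining $\tau$-images of words sharing a common special prefix), it suffices to prove the statement for $\tdtau$ and $\tilde d_g$, the argument for $\dtau$ being verbatim with $\wedge$ in place of $\wee$, $\xi_n$ in place of $\tilde\xi_n$, and $|\cdot|$ in place of $\ind(\cdot)$. First I would establish the \emph{graph-metric formula}: show that for $\xi,\eta\in\tilde V$, $\tilde d_g(\xi,\eta)$ equals the right-hand side of~\eqref{eq-tdtau}. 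The key observation is that any horizontal edge $e=(\tau(u),\tau(v))$ with $u\wee v$ of order $m$ has length $\delta(|u\wee v|) = \delta(|\text{order-}m\text{ privileged word}|)$, and such an edge ``descends by one level'' in the following sense: moving from $\xi$ along an edge whose associated privileged word is $\tilde\xi_m$ lands on an infinite word still passing through $\tilde\xi_{m-1}$ precisely when the relevant $\tilde b_\tau$ flag vanishes, since $\tau(\tilde\xi_m)\wee\xi=\tilde\xi_m$ is exactly the condition that $\tau$ already ``commits'' to the cylinder $[\tilde\xi_m]$. A shortest path from $\xi$ to $\eta$ therefore must climb from $\xi$ up to the common privileged ancestor $\xi\wee\eta$ and back down to $\eta$, and at each order $n>\ind(\xi\wee\eta)$ it is \emph{forced} to traverse a horizontal edge at that level exactly when $\tilde b_\tau(\tilde\xi_n)=1$ (resp. $\tilde b_\tau(\tilde\eta_n)=1$); one then checks that among edges at a given level the cheapest choice gives the stated summand, and that the single ``crossing'' edge at level $\ind(\xi\wee\eta)$ contributes $\delta(|\xi\wee\eta|)$. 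This is a combinatorial path-optimization over the tree-plus-horizontal-edges structure, and I expect it to be the routine but bookkeeping-heavy core.

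Next I would prove that $\tdtau$ restricted to $\mathrm{Im}(\tau)$ coincides with $\tilde d_g$. The inequality $\tdtau\le\tilde d_g$ is immediate: any $f$ admissible in~\eqref{def-m2} satisfies $|f(s(e))-f(r(e))|\le l(e)$ for every edge, hence $|f(\xi)-f(\eta)|\le\sum l(e_j)$ along any path, so $|f(\xi)-f(\eta)|\le\tilde d_g(\xi,\eta)$; taking the supremum over $f$ gives the claim. For the reverse inequality $\tdtau\ge\tilde d_g$ one must exhibit, for fixed $\xi,\eta\in\tilde V$, an admissible continuous $f$ with $|f(\xi)-f(\eta)|=\tilde d_g(\xi,\eta)$; the natural candidate is $f(\cdot)=\tilde d_g(\xi,\cdot)$ extended off $\tilde V$ — but one must check it is $1$-Lipschitz for $l$ on edges (triangle inequality for $\tilde d_g$) \emph{and} that it extends continuously to $\Xi$. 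Continuity is where care is needed: since $\delta\to 0$ at infinity and the weight-function conditions control how fast, the tail sums $\sum_{n>N}\tilde b_\tau(\tilde\xi_n)\delta(|\tilde\xi_n|)$ must be shown to vanish as $N\to\infty$, so that $\tilde d_g(\xi,\cdot)$ is uniformly continuous on $\tilde V$ and extends; this is exactly where the hypothesis ``$\tdtau$ continuous'' (or its analogue) enters in the last sentence. With the extension in hand, $f(\eta)-f(\xi)=\tilde d_g(\xi,\eta)$ shows $\tdtau(\xi,\eta)\ge\tilde d_g(\xi,\eta)$, completing the identification on $\mathrm{Im}(\tau)$.

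Finally, for the closing clause — that if $\tdtau$ (resp.\ $\dtau$) is continuous then~\eqref{eq-tdtau} (resp.~\eqref{eq-dtau}) holds for \emph{all} $\xi,\eta\in\Xi$ — I would use density of $\mathrm{Im}(\tau)$ in $\Xi$ (Remark~\ref{pf11.rem-approxgraph}(i), via Lemma~\ref{pf11.lem-infinitepaths}) together with continuity of both sides. The left-hand side $\tdtau$ is continuous by hypothesis. For the right-hand side, note that $|\xi\wee\eta|$, $\ind(\xi\wee\eta)$, the privileged prefixes $\tilde\xi_n$, and the flags $\tilde b_\tau(\tilde\xi_n)$ all depend only on finitely many letters of $\xi,\eta$ once truncated, and the tail of each series is uniformly small by the weight-function estimates in Definition~\ref{def-weight}; hence the right-hand side of~\eqref{eq-tdtau} is continuous in $(\xi,\eta)$. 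Two continuous functions agreeing on the dense set $\mathrm{Im}(\tau)\times\mathrm{Im}(\tau)$ agree everywhere. The main obstacle throughout is the first step — getting the explicit graph-metric formula by correctly analyzing which horizontal edges a geodesic in $\tilde\Gamma_\tau$ is obliged to use — since the continuity/extension and density arguments are then comparatively soft.
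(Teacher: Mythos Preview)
The paper's own proof is simply a reference to \cite{KS10}, Lemma~4.1, so a direct line-by-line comparison is not possible; your overall architecture (compute $\tilde d_g$ explicitly via a geodesic analysis, then show $\tdtau=\tilde d_g$ on $\tilde V$ by the two inequalities, then extend by density) is the natural one and almost certainly matches what is done there.

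There is, however, one genuine gap in your reverse-inequality step. You propose to take $f=\tilde d_g(\xi,\cdot)$ and extend it continuously to $\Xi$, arguing that the weight-function axioms force the tail sums $\sum_{n>N}\tbtau(\tilde\zeta_n)\delta(|\tilde\zeta_n|)$ to vanish. They do not: Definition~\ref{def-weight} allows $\delta(n)=\frac{1}{n+1}$, and nothing in the hypotheses prevents $|\tilde\zeta_n|$ from growing only linearly in $n$, in which case $\sum_n\delta(|\tilde\zeta_n|)$ diverges and $\tilde d_g(\xi,\cdot)$ is unbounded on $\tilde V$, hence has no continuous extension. You then say this is ``where the continuity hypothesis enters in the last sentence,'' but the formula on $\mbox{Im}(\tau)$ is asserted \emph{without} that hypothesis, so you cannot invoke it here.

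The repair is easy and is the point you are missing: for $\xi,\eta\in\mbox{Im}(\tau)$ the sums in \eqref{eq-tdtau} are automatically \emph{finite}. Indeed if $\xi=\tau(u)$ then property~(ii) of Definition~\ref{pf11.def-choices} gives $\tau(\tilde\xi_n)=\xi$ for every privileged prefix $\tilde\xi_n$ with $|\tilde\xi_n|\ge|u|$, hence $\tbtau(\tilde\xi_n)=0$ for all such $n$; likewise for $\eta$. So for fixed $\xi,\eta\in\tilde V$ only finitely many levels contribute, and you can take as test function a \emph{finite} signed sum of (scaled) characteristic functions of the relevant cylinders $[\tilde\xi_n]$, $[\tilde\eta_n]$. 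Such an $f$ is locally constant, hence continuous on $\Xi$ with no extension argument needed, satisfies the edge constraint by construction, and realises $|f(\xi)-f(\eta)|=\tilde d_g(\xi,\eta)$. With this modification your plan goes through; the continuity hypothesis is then used only, and exactly, for the density argument in the final clause, as the lemma's phrasing indicates.
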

\begin{proof}
As in \cite{KS10}, Lemma~4.1, with the obvious adaptation in the case of privileged horizontal edges.
\end{proof}

Notice that a sufficient condition for $\dtau$ or $\tdtau$ to be continuous is that
\(\sup_{\xi} \sum_{n} \delta(|\xi_n|) < +\infty\) or \(\sup_{\xi} \sum_{n} \delta(|\tilde \xi_n|) < +\infty\), respectively, (see \cite{KS10} Corollary 4.2).

\begin{prop}
\label{prop-old-new}
Suppose that the subshift is right-special balanced.
\begin{enumerate}[(i)]

\item For all \(\xi,\eta\in \mbox{\rm Im}(\tau)\), we have \(\dtau(\xi,\eta)\leq \tdtau(\xi,\eta)\).

\item Suppose that 
the function
\(\tilde H^{(0)}\ni u\mapsto \frac{\delta(|u|)}{\delta(|\phi^{(0)}(u)|)}\in \RM^+\)
is bounded.
Then the restrictions of $\dtau$ and $\tdtau$ to the graph $\Gamma_\tau=\tilde\Gamma_\tau$ are Lipschitz equivalent.
In particular, if $\dtau$ and $\tdtau$ are continuous then they are Lipschitz equivalent.

\end{enumerate}
\end{prop}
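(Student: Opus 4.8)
The plan is to use the explicit formulas from Lemma~\ref{pf11.lem-specdist} together with the structural facts recorded in Lemma~\ref{lem-phi} (valid because the subshift is right-special balanced).

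\medskip

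\textbf{Part (i).} First I would observe that, since the subshift is right-special balanced, Proposition~\ref{pf11.prop-apgraph} gives $\Gamma_\tau = \tilde\Gamma_\tau$, so in particular $V = \tilde V = \mbox{\rm Im}(\tau)$ restricted to these vertices, and the two sup-formulas \eqref{def-m1} and \eqref{def-m2} are taken over functions constrained on the \emph{same} set of edges (the realization of $H^{(1)}$ equals the realization of $\tilde H^{(1)}$). However $\dtau$ and $\tdtau$ are not literally equal because the \emph{lengths} assigned to a common edge differ: an edge realizing $(u_1,u_2)\in\tilde H^{(1)}$ has length $\delta(|u_1\wee u_2|)$ when viewed in $\tilde E$, but when the same edge is viewed as the realization of $\phi^{(1)}((u_1,u_2))\in H^{(1)}$ it has length $\delta(|u_1\wedge u_2|) = \delta(|\phi^{(0)}(u_1\wee u_2)|)$. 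Since $u_1\wee u_2$ is a prefix of $u_1\wedge u_2$ and $\delta$ is decreasing, the $\tilde E$-length dominates the $E$-length edge by edge; hence every function admissible for the $\tdtau$-constraint is admissible for the $\dtau$-constraint, and therefore $\dtau(\xi,\eta)\le\tdtau(\xi,\eta)$. Alternatively, and perhaps more transparently, I would argue directly from \eqref{eq-dtau} and \eqref{eq-tdtau}: the privileged prefixes of $\xi$ form a subsequence of the right-special prefixes (each privileged word $\tilde\xi_n$ equals some $\xi_{m}$ via $\phi^{(0)}$), the first term satisfies $\delta(|\xi\wee\eta|)\ge\delta(|\xi\wedge\eta|)$, and one checks the tail sums in \eqref{eq-tdtau} are, term by term under the inclusion of index sets, at least the corresponding terms in \eqref{eq-dtau}, using $\tbtau(\tilde\xi_n) = \btau(\phi^{(0)}(\tilde\xi_n))$ and $\delta(|\tilde\xi_n|)\ge\delta(|\phi^{(0)}(\tilde\xi_n)|)$. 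Either route is routine once the bookkeeping is set up.

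\medskip

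\textbf{Part (ii).} For the reverse Lipschitz bound I would use the boundedness hypothesis: let $C = \sup_{u\in\tilde H^{(0)}} \delta(|u|)/\delta(|\phi^{(0)}(u)|)$, which is finite and $\ge 1$. Then $\delta(|u\wee v|)\le C\,\delta(|u\wedge v|) = C\,l(\phi^{(1)}((u,v)))$ for every $(u,v)\in\tilde H^{(1)}$; that is, the $\tilde E$-length of each edge is at most $C$ times its $E$-length. Consequently, if $f$ satisfies the $\dtau$-constraint then $C^{-1}f$ (equivalently, scaling the whole admissible family) satisfies the $\tdtau$-constraint — more precisely, if $|f(s(e))-f(r(e))|\le l_E(e)$ for all edges then $|f(s(e))-f(r(e))|\le l_E(e)\le l_{\tilde E}(e)$ is false in general, so instead one notes $|(C f)(s(e)) - (Cf)(r(e))| \le C\, l_E(e)$ is not quite $l_{\tilde E}(e)$ either; the clean statement is: $f$ admissible for $\dtau$ $\Rightarrow$ $f$ admissible for $\tdtau$ with lengths scaled, giving $\tdtau\le C\,\dtau$ after rescaling the test function. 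I would phrase this as: for any $f$ in the $\tdtau$-admissible family, $C^{-1}f$ lies in the $\dtau$-admissible family, whence $\tdtau(\xi,\eta) = \sup|f(\xi)-f(\eta)| \le C\sup_{g\in\dtau\text{-adm}}|g(\xi)-g(\eta)| = C\,\dtau(\xi,\eta)$. Combined with (i) this gives $\dtau\le\tdtau\le C\,\dtau$ on $V = \tilde V$, i.e.\ Lipschitz equivalence of the restrictions to $\Gamma_\tau = \tilde\Gamma_\tau$. The final sentence follows because if both metrics are continuous they are each the continuous extension of their restriction to the dense set $\mbox{\rm Im}(\tau)$ (Lemma~\ref{pf11.lem-specdist} and Remark~\ref{pf11.rem-approxgraph}), and Lipschitz equivalence on a dense set with continuous metrics passes to the whole space $\Xi$.

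\medskip

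\textbf{Main obstacle.} The delicate point is not any single inequality but getting the identification of edges and lengths under $\phi^{(0)},\phi^{(1)}$ exactly right — in particular keeping straight that $\phi^{(1)}$ is a bijection $\tilde H^{(1)}\to H^{(1)}$ in the right-special balanced case (Lemma~\ref{lem-phi}), that $(\tau\times\tau)\circ\phi^{(1)} = \tau\times\tau$, and hence that a single geometric edge of the common graph carries two length labels comparable by the factor $C$. Once this dictionary is in place, both inequalities are immediate from the sup-definitions; no hard analysis is required. I would therefore spend the bulk of the written proof making the correspondence of edges and the relation $\delta\circ\phi^{(0)}\le\delta\le C\,(\delta\circ\phi^{(0)})$ explicit, and then invoke it twice.
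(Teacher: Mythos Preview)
Your overall strategy is sound and aligned with the paper's: both exploit the right-special balanced hypothesis to identify $\phi^{(0)}(\tilde\xi_n)=\xi_n$ and $\tbtau(\tilde\xi_n)=\btau(\xi_n)$, then use $\delta(|\tilde\xi_n|)\ge\delta(|\xi_n|)$ for (i) and the bound $\delta(|\tilde\xi_n|)\le C\,\delta(|\xi_n|)$ for (ii). The paper carries this out entirely through the explicit sums \eqref{eq-dtau}--\eqref{eq-tdtau}; your ``alternative'' route in (i) is exactly that argument, and your direct sup-definition argument in (ii) is a legitimate and arguably cleaner variant.

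One slip to correct: in your first argument for (i) you write that ``every function admissible for the $\tdtau$-constraint is admissible for the $\dtau$-constraint,'' but since $l_E(e)\le l_{\tilde E}(e)$ the $\dtau$-constraint $|f(s(e))-f(r(e))|\le l_E(e)$ is the \emph{stricter} one. The inclusion goes the other way: $\dtau$-admissible $\subset$ $\tdtau$-admissible, and it is precisely this direction that yields $\dtau\le\tdtau$. Your stated conclusion is right but the sentence must be flipped; as written, the premise would give $\tdtau\le\dtau$. In (ii) you eventually land on the correct scaling ($f$ $\tdtau$-admissible $\Rightarrow$ $C^{-1}f$ $\dtau$-admissible, hence $\tdtau\le C\,\dtau$), but the exploratory passage where two wrong scalings are tried and discarded should be pruned from a final write-up.
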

\begin{proof}
We have \(\tbtau(\tilde \xi_n) = 1 \Leftrightarrow \btau( \xi_n)=1\), because $\phi^{(0)}$ is an isomorphism and \(\phi^{(0)}(\tilde \xi_n) = \xi_n\).
Furthermore \(\tilde \xi_n \preceq \phi^{(0)}(\tilde\xi_n) =\xi_n\) so \(\delta(|\xi_n|) \le \delta(\tilde\xi_n)\).
Hence equations~\eqref{eq-dtau} and~\eqref{eq-tdtau} imply that the restrictions to the graph satisfy \(\dtau\leq \tdtau\).

Since the subshift is right-special balanced we also must have
\[
\tilde \xi_n \preceq \xi_n \prec \tilde \xi_{n+1}\,,
\]
for all $n$ and all $\xi$.
Furthermore,
\(
\btau(\xi_n) = \tbtau(\tilde \xi_n),
\)
which directly implies that
\[
\tdtau(\xi,\eta) \leq C \dtau(\xi,\eta)\,
\]
where \(C = \sup_u \frac{\delta(|u|)}{\delta(|\phi^{(0)}(u)|)}\).
\end{proof}

The above Proposition~\ref{prop-old-new} allows us to compare our present work with our previous results in \cite{KS10}.
For right-special balanced subshifts with a weight function satisfying the condition given in (ii), both approaches are equivalent.
Indeed we will prove in Section~\ref{pf11.sect-powerfree}, Theorem~\ref{pf11.thm-characterization}, that a subshift has bounded powers if and only if the infimum and supremum of $\tdtau$ over $\tau$ are Lipschitz equivalent.

An interesting question is to determine which right-special balanced subshifts fulfil condition (ii) in Proposition~\ref{prop-old-new}.
We answer this for Sturmian subshifts.
Sturmian subshifts have a unique right-special word per length, hence are right-special balanced.
It is well-known that for these subshifts bounded powers is equivalent to linear recurrence, see for instance \cite{Dur00,Len03,KS10}. Here, linear recurrence means that there exist a constant $C$ such that the gap between two consecutive occurrences of a word is bounded by $C$ times its length.
\begin{lemma}
A Sturmian subshift satisfies condition (ii) in Proposition~\ref{prop-old-new} if and only if it is linearly recurrent.
\end{lemma}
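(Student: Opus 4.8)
The plan is to unwind condition (ii) of Proposition~\ref{prop-old-new} using the combinatorics of Sturmian words, where the structure of privileged words and right-special words is completely explicit. For a Sturmian subshift, the continued fraction expansion of the irrational slope governs everything: the right-special word $r_k$ of length $n_k$ (where $n_k$ is a denominator of a continued fraction convergent, or more precisely the lengths come from the sequence of return-word lengths) are well understood, and the privileged words are exactly the palindromic factors (Proposition~\ref{pf11.prop-icr}, since Sturmian words are rich). Concretely, I would first recall that for a Sturmian word the privileged prefix $\tilde u$ of order $n$ has length comparable to the length of its associated right-special word $\phi^{(0)}(\tilde u)$ \emph{unless} a large partial quotient $a_{k+1}$ in the continued fraction causes a long run where the same right-special word $r_k$ is the shortest right-special word containing many successive palindromic prefixes of rapidly growing length.

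Second, I would make this precise: condition (ii) asks that $\delta(|u|)/\delta(|\phi^{(0)}(u)|)$ be bounded over privileged words $u$. Since $\delta$ is a weight function (Definition~\ref{def-weight}), boundedness of this ratio is equivalent — up to the multiplicative constants $\bar c,\underline c$ — to boundedness of the ratio $|u|/|\phi^{(0)}(u)|$. So the lemma reduces to: the ratio of the length of a palindromic factor to the length of the shortest right-special factor containing it as a prefix is bounded if and only if the partial quotients $a_k$ of the slope are bounded. Boundedness of partial quotients is the classical characterization of linear recurrence for Sturmian subshifts (the reference \cite{Dur00,Len03,KS10} already cited), and also, by Lemma~\ref{pf11.rem-powers} and the remark that bounded powers $\iff$ repulsive $\iff$ linear recurrence in the Sturmian case, this is the desired equivalence.

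Third, for the actual estimate I would use the standard description of Sturmian factors via the substitutions / the directive sequence: the singular (right-special) words $r_k$ satisfy $|r_{k+1}| = a_{k+1}|r_k| + |r_{k-1}|$, while the palindromic prefixes interpolate between consecutive $r_k$'s — between $r_{k-1}$ and $r_k$ the palindromic prefixes have lengths roughly $|r_{k-1}| + j|r_{k-2}|$ for $j=0,\dots,a_k$ together with some of the form related to $|r_k|$. The shortest right-special word containing such a palindrome as a prefix is $r_k$ (or $r_{k-1}$ for the short ones), so the extreme ratio on this block is about $\frac{|r_k|}{|r_{k-1}|} \approx a_k$. Hence $\sup_u |u|/|\phi^{(0)}(u)| < \infty$ iff $\sup_k a_k < \infty$ iff the subshift is linearly recurrent.

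The main obstacle is the bookkeeping in the second and third steps: one must identify precisely, for each palindromic prefix $u$, what $\phi^{(0)}(u)$ is and produce two-sided length bounds, which requires invoking the detailed structure theory of Sturmian words (the recursive description of special factors and of palindromic prefixes) rather than anything from this paper. Once that dictionary is in place the equivalence with bounded partial quotients — and therefore with linear recurrence — is immediate, and the passage between the $\delta$-ratio and the length-ratio is the routine use of Definition~\ref{def-weight}(i)--(ii) already flagged before the statement.
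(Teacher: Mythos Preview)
Your overall strategy is the right one and aligns with the paper's, but there is a systematic slip that, as written, breaks the argument: you have the ratio inverted. Since $u \preceq \phi^{(0)}(u)$ we always have $|u|/|\phi^{(0)}(u)| \le 1$, so $\sup_u |u|/|\phi^{(0)}(u)| < \infty$ is trivially satisfied and proves nothing. What condition~(ii) actually controls, via Definition~\ref{def-weight}(i)--(ii), is the \emph{opposite} ratio $|\phi^{(0)}(u)|/|u|$: if this is bounded by $2^m$ then property~(ii) gives $\delta(|\phi^{(0)}(u)|)\ge \underline c^{\,m}\delta(|u|)$, and conversely property~(i) shows that if $\delta(|u|)/\delta(|\phi^{(0)}(u)|)$ stays bounded then $|\phi^{(0)}(u)|/|u|$ cannot go to infinity. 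Once you flip the ratio, your heuristic ``extreme ratio $\approx |r_k|/|r_{k-1}|\approx a_k$'' becomes consistent and the equivalence with bounded partial quotients follows.

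The paper's proof takes a more concrete route than your sketch. Rather than reducing first to a length-ratio statement and then surveying all palindromic factors, it singles out the explicit family $u_n=s_{n-1}s_n$ (using the standard recursion $s_n=s_{n-1}^{a_n}s_{n-2}$), argues that these are the extremal privileged words for the supremum in condition~(ii), and computes $\phi^{(0)}(u_n)=u_n s_n^{\,a_{n+1}-1}$ directly. From $|\phi^{(0)}(u_n)|/|u_n|=1+(a_{n+1}-1)\,q_n/(q_n+q_{n-1})$ it then sandwiches $\delta(|u_n|)/\delta(|\phi^{(0)}(u_n)|)$ between quantities depending only on $a_{n+1}$, using the weight axioms at that stage rather than earlier. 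This buys a completely explicit two-sided bound without having to classify all palindromic prefixes and their associated right-special extensions; your approach would need that classification (which you flag as the ``bookkeeping'' obstacle) to be made precise before it is a proof.
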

\begin{proof}
We use the notations of  e.g. \cite{DL03}: $a_n, n\ge 0$, is the $n$-th coefficient in the continuous fraction expansion of the irrational associated to the Sturmian. As is well known linear recurrence (or bounded powers) is equivalent to \(\sup_n a_n < +\infty\) (see e.g. \cite{Len03} Theorem~1 or \cite{KS10} Lemma~4.9).
We write the subshift over the alphabet \(\{0,1\}\), and set \(s_0=0, s_1=0^{a_1-1} 1\), \(s_n=s_{n-1}^{a_n}s_{n-2}, n\ge 2\), and $q_n=|s_n|$.

Consider $u_n=s_{n-1}s_n$.
Words of this type have the longest possible first returns, and since $\delta$ is decreasing it is enough to consider these words to compute the supremum in condition (ii) of Proposition~\ref{prop-old-new}.
The complete first returns to $u_n$ are $v_n = u_n s_n^{a_{n+1}-1}u_n$ and $v'_n= u_n s_n^{a_{n+1}}u_n$.
The word $v_n \wedge v'_n = u_n s_n^{a_{n+1}-1}$ is right-special, and since the subshift is right-special balanced, one has
\[
\phi^{(0)}(u_n) =  u_n s_n^{a_{n+1}-1}\,.
\]
One therefore has:
\[
\frac{|\phi^{(0)}(u_n)|}{|u_n|} =  1 + (a_{n+1}-1)\frac{q_n}{q_n+q_{n-1}}\,,
\]
and gets the inequalities
\[
 \delta(a_{n+1}|u_n|) \le \delta(|\phi^{(0)}(u_n)|) \le \delta(\frac{a_{n+1}+1}{2}|u_n|)\,.
\]
Let $m_n$ be the integer such that \(2^{m_n-1} < a_{n+1} \le 2^{m_n}\).
Using properties (ii) and (i) of the weight $\delta$ in Definition~\ref{def-weight}, one respectively gets
\[
 \underline{c}^{m_n} \delta(|u_n|) \le \delta(a_{n+1}|u_n|) \quad \text{\rm and } \quad
\delta(\frac{a_{n+1}+1}{2}|u_n|) \le \overline{c}\delta(\frac{a_{n+1}+1}{2})\delta(|u_n|)\,,
\]
(notice that $0<\underline{c}<1$) and substituting in the previous inequalities yields
\[
 \frac{1}{\overline{c}\delta(\frac{a_{n+1}+1}{2})} \le \frac{\delta(|u_n|)}{\delta(|\phi^{(0)}(u_n)|)} \le \frac{1}{\underline{c}^{m_n}} \,.
\]
Now if the subshift is lineraly recurrent, then \(\sup_n a_n <+\infty\) and thus \(\sup_n m_n <+\infty\) and condition (ii) of Proposition~\ref{prop-old-new} follows from the above right inequality.
If condition (ii) of Proposition~\ref{prop-old-new} holds, then the above left inequality imply \(\sup_n 1/\delta(a_n) <+\infty \) and it follows that \(\inf_n\delta(a_n) > 0\) and so \(\sup_n a_n <+\infty\) which proves linear recurrence.
\end{proof}

\subsection{Criterion for Lipschitz equivalence}
\label{pf11.ssect-Lipequiv}

We consider now the infimum and supremum of the metrics over all choice functions:
\begin{equation}
\dinf := \inf_\tau d_\tau\,, \qquad \qquad \dsup := \sup_\tau d_\tau\,.
\end{equation}
and
\begin{equation}
\tdinf = \inf_\tau \tilde d_\tau\,, \qquad \qquad \tdsup = \sup_\tau \tilde d_\tau\,.
\end{equation}
Lemma~\ref{pf11.lem-specdist} allows us to obtain explicit formulas.
\begin{prop}
\label{prop-inf}
We have
\[
\dinf(\xi, \eta) = \delta(|\xi\wedge\eta|)\,, \qquad \text{\rm and} \qquad
\tdinf(\xi, \eta) = \delta(|\xi\wee\eta|)\,.
\]
In particular, both metrics induce the topology.
\end{prop}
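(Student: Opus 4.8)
The plan is to compute the infimum over choice functions directly from the explicit formula for $d_\tau$ (resp. $\tilde d_\tau$) given in Lemma~\ref{pf11.lem-specdist}. For $\xi,\eta\in\Ima(\tau)$ the formula reads
\[
d_\tau(\xi,\eta)=\delta(|\xi\wedge\eta|)+\sum_{n>|\xi\wedge\eta|}\btau(\xi_n)\delta(|\xi_n|)+\sum_{n>|\xi\wedge\eta|}\btau(\eta_n)\delta(|\eta_n|)\,,
\]
so that $d_\tau(\xi,\eta)\geq\delta(|\xi\wedge\eta|)$ always, giving $\dinf(\xi,\eta)\geq\delta(|\xi\wedge\eta|)$ as a trivial lower bound (valid for all $\tau$, and then passed to the closure using continuity of $\dinf$ — note $\delta(|\xi\wedge\eta|)=d(\xi,\eta)$ is continuous). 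The same argument applies verbatim to $\tilde d_\tau$ with $\wee$ in place of $\wedge$ and $\ind(\xi\wee\eta)$ in place of $|\xi\wedge\eta|$.

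For the matching upper bound the idea is that, given $\xi,\eta$, one can choose $\tau$ so that all the ``correction'' terms $\btau(\xi_n)$ (resp.\ $\tbtau(\tilde\xi_n)$) vanish for $n>|\xi\wedge\eta|$. Recall $\btau(\xi_n)=1$ iff $\tau(\xi_n)\wedge\xi=\xi_n$, i.e.\ iff the representative chosen for the cylinder $[\xi_n]$ agrees with $\xi$ all the way up to length $|\xi_n|$ — equivalently iff $\tau(\xi_n)$ lies in the child cylinder of $[\xi_n]$ that contains $\xi$. Since $\xi_n$ is right-special for $n>|\xi\wedge\eta|$ strictly, there is at least one other child cylinder, so we are free to pick $\tau(\xi_n)$ there, forcing $\btau(\xi_n)=0$; the compatibility condition (ii) for choice functions only constrains $\tau$ on longer words, and these can be chosen to follow $\tau(\xi_n)$, so no conflict arises. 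Doing this along both branches $(\xi_n)_{n>|\xi\wedge\eta|}$ and $(\eta_n)_{n>|\xi\wedge\eta|}$ — which are disjoint sets of vertices precisely because $n>|\xi\wedge\eta|$ — and choosing $\tau$ arbitrarily on all remaining vertices, yields a choice function $\tau$ with $\tau(\xi\wedge\eta)=\xi$ hitting $\xi$ and $\eta$ as vertices and with $d_\tau(\xi,\eta)=\delta(|\xi\wedge\eta|)$. Hence $\dinf(\xi,\eta)\leq\delta(|\xi\wedge\eta|)$. The privileged case is identical, using that $\tilde\xi_n$ for $n>\ind(\xi\wee\eta)$ is a privileged word with $\tilde a(\tilde\xi_n)\geq 1$ (aperiodicity), so it has at least two complete first returns and we can steer $\tau(\tilde\xi_n)$ into a complete-first-return cylinder not containing $\xi$.

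The main obstacle — really the only point requiring care — is verifying that the described pointwise prescription of $\tau$ on the two branches actually extends to a \emph{bona fide} choice function, i.e.\ is consistent with condition (ii) in Definition~\ref{pf11.def-choices}; this amounts to checking that the vertices where we impose a constraint form a collection of incomparable-or-nested words handled coherently (along each branch they are nested and we propagate the value downward; across branches they diverge below level $|\xi\wedge\eta|$), plus a routine argument that an arbitrary partial assignment satisfying (i)–(ii) on a down-closed, branch-respecting set extends to all of $\Tt^{(0)}$. Once this is in place, the ``In particular'' clause is immediate: $\dinf=d$ and $\tdinf=\tilde d$ are the ultrametrics of \eqref{pf11.eq-metric}, and $\delta(|\xi\wedge\eta|)\to 0$ exactly when $|\xi\wedge\eta|\to\infty$, i.e.\ when $\xi,\eta$ get close in the product topology, so these metrics induce the subshift topology.
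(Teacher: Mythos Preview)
Your overall strategy matches the paper's (via \cite{KS10}): read off the lower bound from Lemma~\ref{pf11.lem-specdist} and realize it by a specific $\tau$. However, your reading of $\btau$ is inverted, and this makes your construction do the opposite of what you intend.

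The condition $\tau(\xi_n)\wedge\xi=\xi_n$ says that the \emph{longest} common prefix of $\tau(\xi_n)$ and $\xi$ is \emph{exactly} $\xi_n$, i.e.\ that $\tau(\xi_n)$ and $\xi$ disagree at the very next letter. Thus $\btau(\xi_n)=1$ precisely when $\tau(\xi_n)$ lies in a one-letter child cylinder of $[\xi_n]$ \emph{different} from the one containing $\xi$, not the same one. Your proposed choice (steer $\tau(\xi_n)$ into the ``other'' child) therefore forces $\btau(\xi_n)=1$ for every $n$, and the resulting $d_\tau(\xi,\eta)$ is the supremum, not the infimum.

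The correct construction is the opposite and in fact simpler: make $\tau$ \emph{follow} $\xi$ and $\eta$. Set $\tau(v)=\xi$ for every prefix $v$ of $\xi$ with $|v|>|\xi\wedge\eta|$, and $\tau(v)=\eta$ for every prefix $v$ of $\eta$ with $|v|>|\xi\wedge\eta|$; extend arbitrarily elsewhere. Condition~(ii) of Definition~\ref{pf11.def-choices} is automatic along each branch, the two branches are disjoint past $\xi\wedge\eta$, both $\xi$ and $\eta$ lie in $\Ima(\tau)$, and every $\btau(\xi_n)$, $\btau(\eta_n)$ in the sums vanishes (since $\tau(\xi_n)=\xi$ agrees with $\xi$ forever). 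The ``obstacle'' you flag about extending a partial assignment then evaporates. The privileged case is identical with $\wee$ and $\tilde\xi_n$.

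One more point on the topology clause: for $\tdinf$ you need that $|\xi\wee\eta|\to\infty$ whenever $\xi,\eta$ get close in the product topology, i.e.\ that privileged prefixes are cofinal among the prefixes of any $\xi$. This is exactly Lemma~\ref{pf11.lem-infinitepaths}, which the paper invokes here and you should as well.
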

\begin{proof}
The formulas are proven as in \cite{KS10}, Corollary 4.5, and the latter statement follows from
Lemma~\ref{pf11.lem-infinitepaths}.
\end{proof}
\begin{prop}
\label{pf11.prop-dinfsup}
For any \(\xi, \eta\in \Xi\) we have
\begin{equation}
\dsup(\xi, \eta) = \delta(|\xi\wedge\eta|) + \sum_{n>|\xi\wedge\eta|} \delta(|\xi_n|)
+ \sum_{n>|\xi\wedge\eta|} \delta(|\xi_n|)
\end{equation}
and
\begin{equation}
\tdsup(\xi, \eta) = \delta(|\xi\wee\eta|) + \sum_{n>\ind(\xi\wee\eta)} \delta(|\tilde \xi_n|)
+ \sum_{n>\ind(\xi\wee\eta)} \delta(|\tilde \xi_n|) \,.
\end{equation}
In particular, $\tdinf$ and $\tdsup$ are Lipschitz equivalent if and only if there exists $C>0$ such that for all $\xi \in \Xi$ and all $m$ we have
\begin{equation}
\label{pf11.eq-equivlip}
\delta(|\tilde \xi_m|)^{-1} \sum_{n>m} \delta(|\tilde \xi_n|) \le C
\end{equation}
\end{prop}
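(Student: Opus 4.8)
\textbf{Proof plan for Proposition~\ref{pf11.prop-dinfsup}.}

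The plan is to first establish the closed formulas for $\dsup$ and $\tdsup$ and then read off the Lipschitz equivalence criterion from them. For the formulas I would argue as follows. The expression~\eqref{eq-tdtau} for $\tdtau$ from Lemma~\ref{pf11.lem-specdist} contains the indicator terms $\tbtau(\tilde\xi_n)$, which are $0$ or $1$ according to whether the choice function $\tau$ sends $\tilde\xi_n$ to an infinite word sharing the privileged prefix $\tilde\xi_n$ with $\xi$. Taking the supremum over $\tau$, the key point is that one can choose $\tau$ so that $\tbtau(\tilde\xi_n)=1$ for every $n>\ind(\xi\wee\eta)$ along both $\xi$ and $\eta$ simultaneously: for each privileged prefix $\tilde\xi_n$ of $\xi$ (resp. $\tilde\eta_n$ of $\eta$) simply declare $\tau$ on the cylinder $[\tilde\xi_n]$ to follow $\xi$ (resp. $\eta$); since $\xi$ and $\eta$ branch only above their longest common privileged prefix, these prescriptions are consistent, and one extends $\tau$ arbitrarily (but consistently, in the sense of Definition~\ref{pf11.def-choices}) elsewhere. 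With this $\tau$ all indicators equal $1$, which saturates~\eqref{eq-tdtau}; since no choice of $\tau$ can make any indicator exceed $1$, this choice is the supremizer, giving the stated formula for $\tdsup$. The same argument with~\eqref{eq-dtau} and right-special prefixes gives the formula for $\dsup$. One should note the convergence of the sums is not assumed here; if a sum diverges the formula reads $\tdsup=+\infty$, and the Lipschitz comparison below is then automatically governed by whether~\eqref{pf11.eq-equivlip} holds (a finite bound forces convergence).

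For the Lipschitz equivalence statement: by Proposition~\ref{prop-inf} we have $\tdinf(\xi,\eta)=\delta(|\xi\wee\eta|)$, and from the formula just proven, $\tdsup(\xi,\eta) = \delta(|\xi\wee\eta|) + R_\xi(m) + R_\eta(m)$ where $m=\ind(\xi\wee\eta)$ and $R_\zeta(m):=\sum_{n>m}\delta(|\tilde\zeta_n|)$. Since $\tdinf\le\tdsup$ always, Lipschitz equivalence reduces to the existence of $C'>0$ with $\tdsup\le C'\tdinf$, i.e. $R_\xi(m)+R_\eta(m)\le (C'-1)\,\delta(|\xi\wee\eta|) = (C'-1)\,\delta(|\tilde\xi_m|)$ for all $\xi,\eta$ and $m=\ind(\xi\wee\eta)$.

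It remains to see this is equivalent to~\eqref{pf11.eq-equivlip}, namely $\delta(|\tilde\xi_m|)^{-1}R_\xi(m)\le C$ for all $\xi$ and \emph{all} $m$ (not just those realized as orders of common privileged prefixes). One direction is immediate: summing the bound for $\xi$ and for $\eta$ gives the needed inequality with $C'=2C+1$. For the converse, given any $\xi$ and any $m$, one must exhibit $\eta$ with $\xi\wee\eta = \tilde\xi_m$, so that $m=\ind(\xi\wee\eta)$ and the $\tdsup\le C'\tdinf$ bound applies, yielding $R_\xi(m)\le R_\xi(m)+R_\eta(m)\le (C'-1)\delta(|\tilde\xi_m|)$, i.e.~\eqref{pf11.eq-equivlip} with $C=C'-1$. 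Such an $\eta$ exists because $\tilde\xi_m$ is a privileged word and, by aperiodicity, $\tilde a(\tilde\xi_m)\ge 1$, so $\tilde\xi_m$ has at least two complete first returns; choosing an infinite word $\eta$ through a complete first return of $\tilde\xi_m$ different from the one $\xi$ passes through (equivalently, through a different privileged prefix $\tilde\eta_{m+1}\ne\tilde\xi_{m+1}$) ensures $\xi\wee\eta=\tilde\xi_m$. The main obstacle is this last point --- guaranteeing that every level $m$ is attainable as $\ind(\xi\wee\eta)$ --- but it follows cleanly from the fact, already recorded after Definition~\ref{pf11.def-av}, that $\tilde a(v)\ge 1$ for every privileged word $v$.
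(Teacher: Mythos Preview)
Your overall plan---compute the supremum by exhibiting a choice function that makes every indicator equal to $1$, then read off criterion~\eqref{pf11.eq-equivlip}---is the right one and matches the paper (which simply refers to \cite{KS10}, Corollary~4.4, and adds the continuity remark). The second half of your argument, on the Lipschitz equivalence, is correct and complete, including the observation that every level $m$ is realizable as $\ind(\xi\wee\eta)$ via $\tilde a(\tilde\xi_m)\ge 1$.

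There is, however, a genuine error in your construction of the maximizing $\tau$. You write that one should ``declare $\tau$ on the cylinder $[\tilde\xi_n]$ to follow $\xi$''. But if $\tau(\tilde\xi_n)=\xi$ (or more generally if $\tau(\tilde\xi_n)$ passes through $\tilde\xi_{n+1}$), then $\tau(\tilde\xi_n)\wee\xi$ is strictly longer than $\tilde\xi_n$, so by the definition preceding Lemma~\ref{pf11.lem-specdist} one gets $\tbtau(\tilde\xi_n)=0$, not $1$. Your construction therefore produces the \emph{infimum} $\tdinf$, not the supremum. To obtain $\tbtau(\tilde\xi_n)=1$ you must do the opposite: choose $\tau(\tilde\xi_n)$ to pass through a complete first return of $\tilde\xi_n$ \emph{different} from $\tilde\xi_{n+1}$, i.e.\ to branch away from $\xi$ at privileged level $n$. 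This is possible precisely because $\tilde a(\tilde\xi_n)\ge 1$, and these choices are mutually consistent with Definition~\ref{pf11.def-choices} since each $\tau(\tilde\xi_n)$ then avoids $\tilde\xi_{n+1}$.

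A secondary point: with the corrected construction, $\xi$ and $\eta$ are no longer in $\text{Im}(\tau)$ (property~(ii) of choice functions would force $\tau(\tilde\xi_n)=\xi$ eventually), so formula~\eqref{eq-tdtau} is not directly available. One clean fix is a truncation: for each $N$, let $\tau_N(\tilde\xi_n)$ branch away from $\xi$ for $m<n\le N$ but set $\tau_N(\tilde\xi_{N+1})=\xi$ (and likewise along $\eta$); then $\xi,\eta\in\text{Im}(\tau_N)$, formula~\eqref{eq-tdtau} applies and yields the partial sum, and letting $N\to\infty$ gives the lower bound for $\tdsup$. The upper bound is immediate since each indicator is at most $1$. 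This also makes precise your parenthetical remark on convergence, and is essentially the continuity point the paper singles out.
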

\begin{proof} As in \cite{KS10}, Corollary~4.4, with the added remark that by continuity of $\tdinf$ (Proposition~\ref{prop-inf}) the inequality (\ref{pf11.eq-equivlip}) implies the continuity of $\tdsup$.
\end{proof}

\section{Characterization of bounded powers}
\label{pf11.sect-powerfree}

As mentioned in the introduction, the characterization of power boundedness hinges on a comparison of
$\tdinf$ with $\tdsup$. We follow again here closely \cite{KS10} replacing right-special horizontal edges by
privileged horizontal edges.
We state our main theorem.

\begin{thm}
\label{pf11.thm-characterization}
Let $\Xi$ be a minimal and aperiodic subshift over a finite alphabet.
Then $\Xi$ has bounded powers if and only if $\tdsup$ and $\tdinf$ are Lipschitz equivalent.
\end{thm}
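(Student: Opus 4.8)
The plan is to reduce the Lipschitz equivalence of $\tdsup$ and $\tdinf$ to the summability criterion \eqref{pf11.eq-equivlip} established in Proposition~\ref{pf11.prop-dinfsup}, and then to show that this criterion is equivalent to repulsiveness, which by Lemma~\ref{pf11.rem-powers} is equivalent to bounded powers. By Lemma~\ref{pf11.lem-repulsive} we may work with the right-infinite subshift $\Xi^+$ throughout, so that the sequence of privileged prefixes $\tilde\xi_n$ of an infinite word $\xi$ is well-defined. So the whole proof amounts to showing: \emph{$\Xi$ has bounded powers if and only if there is $C>0$ with $\delta(|\tilde\xi_m|)^{-1}\sum_{n>m}\delta(|\tilde\xi_n|)\le C$ for all $\xi$ and all $m$.}

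\textbf{The easy direction (criterion $\Rightarrow$ bounded powers, contrapositive).} Suppose $\Xi$ does \emph{not} have bounded powers. By Lemma~\ref{pf11.rem-powers} it is not repulsive, so for arbitrarily small $\epsilon>0$ there is a word $u$ occurring to a large power $p\approx 1/\epsilon$ in $\Ll$, i.e. $u^p\in\Ll$. Now apply Lemma~\ref{pf11.lem-powers}: there are privileged words $v_1,\dots,v_p$ with $v_j=u^j\tilde u$ and each $v_{j+1}$ a complete first return to $v_j$. Hence these $v_j$ occur as consecutive privileged prefixes $\tilde\xi_{m+1},\dots,\tilde\xi_{m+p-1}$ of some $\xi\in\Xi^+$ (some chain of complete first returns passing through each $v_j$), with lengths $|v_j|=(j|u|+|\tilde u|)$. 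Then for the index $m$ corresponding to $v_1$, the partial sum $\sum_{n=m+1}^{m+p-1}\delta(|\tilde\xi_n|)$ contains $p-1$ terms $\delta(j|u|+|\tilde u|)$, $j=1,\dots,p-1$; using property (i) of the weight $\delta$ (submultiplicativity, giving a lower bound $\delta(j|u|+|\tilde u|)\ge$ const$\cdot\delta(|u|)^{?}/\dots$ — more simply, since $\delta$ is decreasing and $j|u|\le p|u|=|u^p|$, each term is $\ge\delta(|u^p|)$, while $\delta(|\tilde\xi_m|)=\delta(|v_1|)=\delta(|u|+|\tilde u|)$) one gets $\delta(|v_1|)^{-1}\sum_{n>m}\delta(|\tilde\xi_n|)\ge (p-1)\,\delta(|u^p|)/\delta(|u|+|\tilde u|)$. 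Using the doubling-type bound (ii) of $\delta$ iterated $O(\log p)$ times to compare $\delta(|u^p|)$ with $\delta(|u|+|\tilde u|)$, this quantity is $\ge (p-1)\,\underline c^{\,O(\log p)}$, which is unbounded as $p\to\infty$ since $\underline c^{\,O(\log p)}$ decays only polynomially in $p$. Hence \eqref{pf11.eq-equivlip} fails. The bookkeeping with $\delta$ here is routine given Definition~\ref{def-weight}; the content is Lemma~\ref{pf11.lem-powers}.

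\textbf{The hard direction (bounded powers $\Rightarrow$ criterion).} Assume $\Xi$ is repulsive with index $\ell>0$. I want a geometric-type growth bound on the lengths $|\tilde\xi_n|$ along any chain of iterated complete first returns. The key claim is: there is $\lambda>1$ (depending only on $\ell$) such that $|\tilde\xi_{n+1}|\ge\lambda|\tilde\xi_n|$ whenever $|\tilde\xi_n|$ is large enough, OR, more robustly, that among any bounded number $k_0$ of consecutive steps the length at least doubles. Indeed, if $v$ is privileged and $v'$ is a complete first return to $v$ with $|v'|$ barely larger than $|v|$, then $v$ is a proper prefix and suffix of $v'$ with small $(|v'|-|v|)/|v|$, which by repulsiveness is bounded below by $\ell$; so in fact $|v'|\ge(1+\ell)|v|$ \emph{at every single step}. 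That immediately gives $|\tilde\xi_n|\ge(1+\ell)^{\,n-n_0}|\tilde\xi_{n_0}|$ for $n\ge n_0$. Now feed this geometric growth into the sum: using submultiplicativity and the doubling bound on $\delta$, one shows $\delta$ is (up to constants) "geometrically summable along geometric sequences," precisely $\sum_{n>m}\delta(|\tilde\xi_n|)\le\sum_{n>m}\delta((1+\ell)^{n-m}|\tilde\xi_m|)\le C(\ell,\delta)\,\delta(|\tilde\xi_m|)$, where the last inequality follows because $\delta((1+\ell)^j t)\le \overline c\,\delta((1+\ell)^j)\,\delta(t)$ and $\sum_j\overline c\,\delta((1+\ell)^j)<\infty$ (each factor-of-$(1+\ell)$ step multiplies $\delta$ by at most a fixed constant $<1$ after $O(1)$ such steps, via property (ii) applied to doublings). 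This gives \eqref{pf11.eq-equivlip} with $C=C(\ell,\delta)$, uniformly in $\xi$ and $m$. I expect the main obstacle to be precisely this last point — establishing that $\delta$ evaluated along a geometrically growing sequence of arguments is summable with sum controlled by a constant times the first term, using only the two axioms (i), (ii) of Definition~\ref{def-weight} — but this is a standard manipulation: writing $(1+\ell)^j\le 2^{\lceil j\log_2(1+\ell)\rceil}$ and iterating (ii) gives a lower bound $\delta((1+\ell)^j)\ge \underline c^{\,O(j)}$, while (i) and monotonicity give the upper bound $\delta((1+\ell)^j)\le \overline c^{\,O(j)}\delta(1)^{?}$... one must be slightly careful and instead bound $\delta((1+\ell)^j t)$ directly; splitting $(1+\ell)^j t$ as a product and applying (i) repeatedly yields $\delta((1+\ell)^j t)\le \overline c^{\,j}\delta(1+\ell)^j\delta(t)$, and since $\delta(1+\ell)<\delta(0)$ one can choose — no: the cleanest route is to note $\delta$ decreasing plus (ii) forces $\delta(2^k)\ge\underline c^k\delta(1)$ but that's a lower bound; for the \emph{upper} bound giving summability one uses (i): $\delta(2^k)\le\overline c\,\delta(2^{k-1})\delta(2)\le\cdots\le\overline c^{k-1}\delta(2)^{k-1}\delta(2)=\overline c^{k-1}\delta(2)^k$, and this is summable in $k$ precisely when $\overline c\,\delta(2)<1$, which one arranges by replacing $2$ with a large power $2^{N}$: $\delta(2^{Nk})\le\overline c^{\,k-1}\delta(2^N)^k$ and $\delta(2^N)\to0$, so for $N$ large $\overline c\,\delta(2^N)<\tfrac12$. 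Matching the geometric rate $1+\ell$ to such a power $2^N$ (each step of growth by $1+\ell$ occurs, and after $N'=\lceil N/\log_2(1+\ell)\rceil$ steps the argument has grown by at least $2^N$) completes the estimate. This finishes the proof.
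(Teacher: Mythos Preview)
Your hard direction (bounded powers $\Rightarrow$ criterion~\eqref{pf11.eq-equivlip}) is correct and is exactly the paper's argument: repulsiveness gives $|\tilde\xi_{n+1}|\ge(1+\ell)|\tilde\xi_n|$ since $\tilde\xi_n$ is a proper prefix and suffix of $\tilde\xi_{n+1}$, and then the axioms on $\delta$ yield geometric summability. (The paper streamlines your meandering at the end by first rescaling $\delta$ so that $\overline c=1$; then property~(i) iterates to $\delta((1+\ell)^k t)\le\delta(1+\ell)^k\delta(t)$ directly, and $\delta(1+\ell)<1$ gives a convergent geometric series.)

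Your ``easy'' direction, however, has a genuine gap. Anchoring at $v_1$ and summing up to $v_{p-1}$, you obtain the lower bound $(p-1)\,\underline c^{\,\lceil\log_2 p\rceil}$, and you assert this is unbounded because $\underline c^{\,\lceil\log_2 p\rceil}$ decays only polynomially. But polynomial decay can beat linear growth: $(p-1)\,\underline c^{\,\lceil\log_2 p\rceil}\sim p^{\,1+\log_2\underline c}$, which tends to $0$ whenever $\underline c<1/2$. Nothing in Definition~\ref{def-weight} forbids this; for instance $\delta(n)=1/(n+1)^2$ is a valid weight with $\underline c\approx 1/4$, and for this weight the ratio $\delta(|v_1|)^{-1}\sum_{j\ge 2}\delta(|v_j|)$ is actually \emph{bounded} uniformly in $p$ (it behaves like $\sum_{j\ge 2}1/j^2$). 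So your estimate does not prove what you need.

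The paper's remedy is a simple but essential change of anchor point. Taking $p=2q+1$ and anchoring at $v_q$ instead of $v_1$, one sums only the $q$ terms $\delta(|v_{q+1}|),\dots,\delta(|v_{2q}|)$. Since $|v_{2q}|=2q|u|+|\tilde u|\le 2(q|u|+|\tilde u|)=2|v_q|$, a \emph{single} application of the doubling bound~(ii) gives $\delta(|v_j|)\ge\delta(|v_{2q}|)\ge\underline c\,\delta(|v_q|)$ for each $q<j\le 2q$. Hence
\[
\delta(|v_q|)^{-1}\sum_{j=q+1}^{2q}\delta(|v_j|)\;\ge\;q\,\underline c,
\]
which is unbounded as $q\to\infty$ regardless of the value of $\underline c$. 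This is the missing idea in your argument.
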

\begin{proof}
By Lemma~\ref{pf11.lem-repulsive} we can assume that $\Xi$ is a right-infinite subshift: if $\Xi$ is bi-infinite we consider its right-infinite restriction $\Xi^+$, if $\Xi$ is left-infinite we simply consider its right-infinite ``mirror image''.

Up to rescaling the weight function $\delta$, we can assume that \(\overline{c}=1\), and that $\delta(1) \le 1$.

Assume that $\Xi$ has bounded powers, with index of repulsiveness $\ell>0$.
Fix $\xi\in \Pi_\infty$ and $m \in \NM$.
By definition of privileged words, $\tilde\xi_n$ is a prefix and suffix of $\tilde\xi_{n+1}$, so we have \((|\tilde\xi_{n+1}| - |\tilde\xi_n|) / |\tilde\xi_n| \ge \ell\), and therefore \( |\tilde\xi_{m+k}| \ge (\ell+1)^k |\tilde\xi_{m}|\) for all $k\ge 1$.
The series in equation~\eqref{pf11.eq-equivlip} in Proposition~\ref{pf11.prop-dinfsup} can then be bounded as follows
\[
\delta(|\tilde\xi_{m}|)^{-1} \sum_{n>m} \delta(|\tilde\xi_n|) \le
\frac{1}{\delta(|\tilde\xi_{m}|)} \sum_{k>1} \delta( (\ell+1)^k |\tilde\xi_m| )
\le \sum_{k>1} \delta(\ell + 1)^k \,
\]
where the last inequalities follow from condition (i) in Definition~\ref{def-weight} of a weight function.
The right-hand-side is a convergent geometric series ($\delta(\ell+1) < 1$) and gives a uniform constant to apply Proposition~\ref{pf11.prop-dinfsup} and conclude that $\tdsup$ and $\tdinf$ are Lipschitz equivalent.

Assume now that $\Xi$ does {\em not} have bounded powers.
Fix an odd integer $p=2q+1$ (large).
By Remark~\ref{pf11.rem-powers} there exists a word $u\in\Ll$ such that $u^p\in\Ll$.
By Lemma~\ref{pf11.lem-powers}, there are $p$ (non-empty) privileged words $v_1, \cdots v_p$, such that \(v_1 \prec v_2 \prec \cdots v_{p-1} \preceq u^p \prec v_p\).
Pick an infinite word $\xi$ with prefix $v_p$, and write $m=|v_q|$.
We have
\[
\delta(|\tilde\xi_m|)^{-1} \sum_{n>m} \delta(|\tilde\xi_n|) \ge \delta(|v_q|)^{-1} \sum_{j=q+1}^{2q} \delta(|v_j|) \ge
\delta(|v_q|)^{-1} \; q \; \delta(|v_{2q}|) \ge \underline{c} \,q \,,
\]
where the last inequalities follow from (ii) in Definition~\ref{def-weight} of a weight function.
Since $p$, hence $q$, was chosen arbitrarily large, the criterion for Lipschitz equivalence of Proposition~\ref{pf11.prop-dinfsup} cannot be satisfied, and we conclude that $\tdsup$ and $\tdinf$ are not Lipschitz equivalent.
\end{proof}

\section{Zeta-functions and complexity}
\label{pf11.sect-Zeta}

We define the following zeta-functions, $k\in\NM$:
\[
\zeta_k(s) :=  \sum_{v\in\Tt^{(0)}} a(v)^k \delta(|v|)^s\,,
\qquad \text{\rm and} \qquad
\tilde \zeta_k(s) :=  \sum_{v\in\Tt^{(0)}} \tilde a(v)^k
\delta(|v|)^s\,,
\]
where we use the convention $0^0=0$. One expects that the sums converge for $\Re(s)$ sufficiently large and calls the smallest $s_0$ such that the series converges for $\Re(s)>s_0$ the abscissa of convergence for the series.
The functions have the following interpretations:
\begin{itemize}
\item $\frac12(\zeta_2(s)+\zeta_1(s))$ and \(\frac12(\tilde\zeta_2(s)+\tilde\zeta_1(s))\) are the zeta-functions of
the spectral triples defined by right-special and by privileged words, respectively, see Section~\ref{pf11.sect-spectrip} equation~\eqref{pf11.eq-zeta} and \eqref{pf11.eq-tzeta}.

\item $\zeta_1$ which was denoted $\frac12\zeta_{low}$ in \cite{KS10} (see Section~5.1) is related to the word complexity of the subshift.
Indeed, if we denote by $p(n)$ the number of words of length $n$ then
\[
 \zeta_1(s) = \sum_n (p(n+1)-p(n))\delta(n)^s\,,
\]
and if the complexity has a weak complexity exponent $\beta$ (which is the case, if the upper and the lower box counting dimension of the subshift space exist and the complexity is polynomially bounded, see~\cite{KS10} Section~1.2, and Lemma~5.4 in Section~5.1) then the abscissa of convergence of  $ \zeta_1(s)$ equals $\beta$ (we assume that \(\delta\in\ell^{1+\epsilon}\backslash\ell^{1-\epsilon}\) for all $\epsilon>0$, see \cite{KS10} Section~5.1).

\item $\zeta_0$ and $\tilde \zeta_0$ are related to the complexity $\prs$ of right-special words and the complexity $\ppr$ of privileged words, respectively:
\[
\zeta_0(s) = \sum_n \prs(n)  \delta^s(|v|),\quad
\tilde\zeta_0(s) = \sum_n \ppr(n)  \delta^s(|v|)\,.
\]
If these complexities have weak complexity exponents $\brs$ or $\bpr$ then the abscissa of convergence for $\zeta_0$ or $\tilde \zeta_0$ are $\brs+1$ or $\bpr+1$, respectively.
\end{itemize}
Given that $a(v)$ is bounded we have $\zeta_0(s)\leq \zeta_k(s) \leq |\Aa|^k \zeta_0(s)$ and hence all $\zeta_k$ have the same abscissa of convergence.

Thanks to Lemma~\ref{lem-av} we can compare $\zeta_k$ to $\tilde\zeta_k$.
\begin{prop}
\label{pf11.prop-zeta}
We have $\tilde\zeta_k\geq \zeta_k$ and
\(\zeta_1(s)\geq\frac12 \tilde\zeta_0(s) -\frac12 \delta(0)^s\).

In particular, if the subshift has almost finite rank and \(\delta\in\ell^{1+\epsilon}\backslash\ell^{1-\epsilon}\) for all $\epsilon>0$, then all zeta-functions have the same abscissa of convergence.
\end{prop}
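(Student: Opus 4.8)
The plan is to deduce everything from one combinatorial fact, which I would establish first: \emph{as $u$ ranges over the privileged words, the sets $S(u)$ partition the right-special words into non-empty classes.} To prove this I would attach to a right-special word $r$ its longest privileged prefix $\psi(r)$; extending $r$ to an infinite word and reading off privileged prefixes shows that $\psi(r)\preceq r\prec u'$ for the complete first return $u'$ to $\psi(r)$ lying along that word, so $r\in S(\psi(r))$. For uniqueness, if $r\in S(u)$ with $u$ privileged and $u\neq\psi(r)$, then $u\prec\psi(r)\preceq r$ and the complete first return $u''$ witnessing $r\in S(u)$ has $\psi(r)$ as a proper prefix; but $\psi(r)$ is itself a complete first return to $u$, so $u$ would occur in $u''$ at positions $0$, $|\psi(r)|-|u|$ and $|u''|-|u|$, three distinct occurrences — impossible. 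Non-emptiness of $S(u)$ is the observation already used for Lemma~\ref{lem-av}: two distinct complete first returns to $u$ cannot be prefix-comparable (the same triple-occurrence obstruction), hence they branch at a right-special word lying in $S(u)$. I would also record that $r\in S(u)$ forces $u\preceq r$, hence $\delta(|r|)\le\delta(|u|)$ since $\delta$ is decreasing.

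Given this, the inequality $\tilde\zeta_k\ge\zeta_k$ is immediate. Working coefficientwise (all terms being non-negative), I would split over the partition and use $\delta(|r|)^s\le\delta(|u|)^s$ for $r\in S(u)$:
\[
\zeta_k(s)=\sum_{u\ \mathrm{priv.}}\sum_{r\in S(u)}a(r)^k\,\delta(|r|)^s\ \le\ \sum_{u\ \mathrm{priv.}}\delta(|u|)^s\sum_{r\in S(u)}a(r)^k .
\]
For $k\ge1$, superadditivity of $t\mapsto t^k$ on $[0,\infty)$ and Lemma~\ref{lem-av} give $\sum_{r\in S(u)}a(r)^k\le\bigl(\sum_{r\in S(u)}a(r)\bigr)^k=\tilde a(u)^k$, so $\zeta_k(s)\le\sum_u\tilde a(u)^k\delta(|u|)^s=\tilde\zeta_k(s)$. (For $k=0$ the same computation gives $\zeta_0\le\sum_u|S(u)|\,\delta(|u|)^s\le\tilde\zeta_1$, which collapses to $\zeta_0\le\tilde\zeta_0$ precisely when the subshift is right-special balanced, where $|S(u)|\equiv1$; this weaker statement already suffices for the last assertion.)

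For $\zeta_1(s)\ge\tfrac12\tilde\zeta_0(s)-\tfrac12\delta(0)^s$ I would go the other way round. Each privileged word $u$ with $|u|\ge1$ is a complete first return to a unique privileged word $v(u)$, and — since no privileged word lies strictly between $v(u)$ and $u$ — the path from $v(u)$ to $u$ branches, so the longest right-special word $\rho(u)$ with $v(u)\preceq\rho(u)\prec u$ is well defined, lies in $S(v(u))$, and gives $\delta(|u|)^s\le\delta(|\rho(u)|)^s$. For a fixed right-special word $r$, any $u$ with $\rho(u)=r$ has $v(u)$ equal to the unique privileged word with $r\in S(v(u))$, and then $u$ is forced once a one-letter extension of $r$ is chosen (there being no right-special word between $r$ and $u$, the path back to $v(u)$ is unique), so at most $a(r)+1$ privileged words map to $r$. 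Since $\tilde\zeta_0(s)-\delta(0)^s=\sum_{|u|\ge1}\delta(|u|)^s$ (sum over privileged words), this yields
\[
\tilde\zeta_0(s)-\delta(0)^s\ \le\ \sum_{|u|\ge1}\delta(|\rho(u)|)^s\ \le\ \sum_{r\ \mathrm{r.s.}}\bigl(a(r)+1\bigr)\delta(|r|)^s=\zeta_1(s)+\zeta_0(s)\le2\,\zeta_1(s),
\]
using $\zeta_0\le\zeta_1$ (true since $1\le a(r)$ for right-special $r$), which is the claim.

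For the final assertion, $1\le a(v)\le|\Aa|-1$ whenever $a(v)\ge1$ gives $\zeta_0\le\zeta_k\le(|\Aa|-1)^k\zeta_0$, so all $\zeta_k$ share a common abscissa $\alpha$; and almost finite privileged rank ($\tilde a(u)+1\le a\log(|u|)^b$) gives $\tilde\zeta_0\le\tilde\zeta_k\le C_k\sum_u\log(|u|)^{bk}\delta(|u|)^s+(\text{finite correction})$, where, because $\delta\in\ell^{1+\epsilon}\setminus\ell^{1-\epsilon}$ for all $\epsilon>0$ forces $\delta$ to decay essentially like $n^{-1}$, the slowly varying factor $\log^{bk}$ does not move the abscissa, so all $\tilde\zeta_k$ share a common abscissa $\tilde\alpha$; then $\tilde\zeta_1\ge\zeta_1$ gives $\tilde\alpha\ge\alpha$ and $\zeta_1\ge\tfrac12\tilde\zeta_0-\tfrac12\delta(0)^s$ gives $\tilde\alpha\le\alpha$. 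The one genuinely delicate point is the partition statement in the first paragraph — the combinatorial content implicitly behind Lemma~\ref{lem-av}; once it is in place, the rest is bookkeeping with monotone weights, Lemma~\ref{lem-av}, and the familiar principle that subpolynomial factors leave abscissae of Dirichlet-type series unchanged.
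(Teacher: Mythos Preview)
Your proof is correct. For $\tilde\zeta_k\ge\zeta_k$ and for the final abscissa statement you follow essentially the same line as the paper, which also rests on Lemma~\ref{lem-av} together with the (there implicit) fact that the sets $S(u)$ partition the right-special words as $u$ runs over privileged words; you do well to state and prove this partition explicitly, and to flag that for $k=0$ the superadditivity step only yields $\zeta_0\le\tilde\zeta_1$, which is all that is needed downstream.

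For the second inequality you take a genuinely different route. The paper, for each privileged $u$, orders the right-special words in $S(u)$ and iteratively assigns to each $r\in S(u)$ a block of $a(r)$ complete first returns (each longer than $r$, hence of smaller weight), leaving one longest return unmatched; bounding that leftover by the sum of the matched ones produces the factor $\tfrac12$. You instead send every non-empty privileged word $u$ to the last right-special word $\rho(u)$ strictly preceding it, bound the fibres of $\rho$ by $a(r)+1$, and close with $\zeta_0\le\zeta_1$. Your argument is shorter and sidesteps the ordering/removal bookkeeping; the paper's version, by contrast, works entirely inside one block $S(u)\cup R(u)$ at a time and so makes the local matching structure more visible. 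Both reach the same inequality with the same constant. One small wording slip: where you write ``the path back to $v(u)$ is unique'' you mean the path forward from $r$ to $u$; the actual counting (at most one $u$ per one-letter extension of $r$, since two such $u$'s would branch at a right-special word strictly beyond $r$) is fine.
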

\begin{proof}
We start with the first inequality.
For a privileged word $u$, we let $R(u)$ denote the set of its complete first returns, and let $S(u)$ denote the set of all right-special words $r$, for which there exists $u'\in R(u)$ such that \( u \preceq r \prec u'\).

By Lemma~\ref{lem-av} we have $\tilde a(u)^k \geq \sum_{r\in S(u)} a(r)^k$.
Furthermore $\delta(|u|)\geq \delta(|r|)$ for any $r\in S(u)$.
Hence
\[
\sum_u a(u)^k\delta(|u|)^s \geq \sum_{u\in \tilde H^{(0)}} \sum_{r\in S(u)}
a(r)^k\delta(|r|)^s = \sum_r a(r)^k\delta(|r|)^s.
\]
As for the second inequality we first order the elements of $S(u)$ in such a way that a right-special word which is a prefix of another one comes later in the order.
Let's say we find $r_1$ up to $r_m$.
We now choose first the $a(r_1)$ shortest elements $u'_1,\cdots,u'_{a(r_1)}\in R(u)$ with $r\prec u'_k$, and we replace $a(r_1)\delta(|r_1|)^s$ in the sum for $\zeta_1$ by the smaller term $\sum_{i=1}^{a(r_1)}\delta(|u'_i|)^s$.
We take out these chosen elements of $R(u)$ to obtain $R_1(u)$ and repeat the procedure with $r_2$, that is, choose the $a(r_2)$ shortest elements $u'_1,\cdots,u'_{a(r_2)}\in R_1(u)$ which satisfy $r_2 \prec u'_k$, and take those chosen elements of $R_1(u)$ to obtain $R_2(u)$.
Iterating this construction yields the inequality
\[
\sum_{r\in S(u)}
a(r)^k\delta(|r|)^s \geq \sum_{u'\in R(u)\backslash R_m(u)} \delta(|u'|)^s\,.
\]
$R_m(u)$ has exactly one element left (one of the longest returns to $u$), which we call $v'$.
Then
\[
\delta(|v'|)^s \leq\frac12 \sum_{u'\in R(u)\backslash R_m(u)} \delta(|u'|)^s
\]
and hence
\[
  \sum_{r\in S(u)}
a(r)^k\delta(|r|)^s \geq \frac12  \sum_{u'\in R(u)} \delta(|u'|)^s\,.
\]
Summing up one obtains \(\zeta_1(s) \geq \frac12 \tilde\zeta_0(s) - \frac12\delta(0)^s\).

Now if the subshift has almost finite rank (see Section~\ref{pf11.ssect-spwords}), then for any $v\in \tilde H^{(0)}$,  $\tilde a(v)$ is bounded by $a \log(|v|)^b$
for some uniform constants $a,b>0$.
Since the summability of $\sum_{v\in \tilde H^{(0)}} \delta(|v|)^s $ implies the summability of
$\sum_{v\in \tilde H^{(0)}} \log(|v|)^b \delta(|v|)^{s+\epsilon} $ for any $\epsilon>0$ we see that
$\tilde \zeta^k$ has an abscissa of convergence which does not depend on $k$.
It then follows from the first formulas of the lemma that all zeta-functions have the same abscissa of convergence.
\end{proof}
The last lemma yields immediately relations between the various weak exponents.
\begin{cor}
\label{pf11.cor-zeta}
Assume the existence of weak complexity exponents. Then
\[
\bpr \leq \brs = \beta-1
\]
and there is equality if the subshift has almost finite rank: $\bpr = \brs$.
\end{cor}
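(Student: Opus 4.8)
The plan is to deduce this corollary directly from Proposition~\ref{pf11.prop-zeta}, passing from inequalities between the zeta-functions to inequalities between their abscissae of convergence and thence to the stated relations between weak exponents. Throughout I would keep in force the standing assumption that $\delta\in\ell^{1+\epsilon}\setminus\ell^{1-\epsilon}$ for all $\epsilon>0$; under it, as recorded in this section, the abscissa of convergence of $\zeta_1$ equals $\beta$, that of $\zeta_0$ equals $\brs+1$, and that of $\tilde\zeta_0$ equals $\bpr+1$.

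For the equality $\brs=\beta-1$ I would use only the bound $0\le a(v)\le|\Aa|-1$, which for real $s$ gives $\zeta_0(s)\le\zeta_1(s)\le|\Aa|\,\zeta_0(s)$, so that $\zeta_0$ and $\zeta_1$ have one and the same abscissa of convergence; equating the two expressions for this abscissa yields $\brs+1=\beta$.

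For the inequality I would invoke the second estimate of Proposition~\ref{pf11.prop-zeta}, namely $\zeta_1(s)\ge\frac12\tilde\zeta_0(s)-\frac12\delta(0)^s$, which I rewrite as $\tilde\zeta_0(s)\le 2\zeta_1(s)+\delta(0)^s$. Since $s\mapsto\delta(0)^s$ is finite for every real $s$, the series $\tilde\zeta_0$ converges wherever $\zeta_1$ does, so the abscissa of convergence of $\tilde\zeta_0$ is at most that of $\zeta_1$, i.e. $\bpr+1\le\beta=\brs+1$, which is the claim.

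Finally, if the subshift has almost finite rank, the last assertion of Proposition~\ref{pf11.prop-zeta} gives that all the zeta-functions, $\tilde\zeta_0$ among them, share a common abscissa of convergence, which we have just identified with $\beta$; hence $\bpr+1=\beta=\brs+1$, i.e. $\bpr=\brs$. I do not foresee a genuine obstacle here: the whole argument is merely a chaining of the estimates of Proposition~\ref{pf11.prop-zeta} with the already established correspondences between abscissae and weak exponents, and the only point requiring a little care is bookkeeping the standing hypotheses (summability of $\delta$ and the existence of $\beta$, $\brs$, $\bpr$) under which those correspondences are legitimate.
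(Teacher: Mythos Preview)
Your proposal is correct and follows exactly the route the paper intends: the paper's own proof is the single sentence ``The last lemma yields immediately relations between the various weak exponents'', and you have simply spelled out those immediate consequences of Proposition~\ref{pf11.prop-zeta} together with the identifications of abscissae already recorded in the section. Nothing is missing or off.
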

The latter result can be seen as an asymptotic version of a much more precise equation between $\ppr$ and $\prs$ which has been obtained for rich subshifts in~\cite{GJWZ09}, namely
\[
\ppr(n)+\ppr(n+1) = \prs(n) + 2 \,,
\]
as in this case privileged words exactly coincide with palindromes by Proposition~\ref{pf11.prop-icr}.

\section{Spectral triples}
\label{pf11.sect-spectrip}
In this final section we provide the spectral triples which can be defined from the graphs $\Gamma_\tau$ and $\tilde \Gamma_\tau$ yielding via Connes' formula the metrics $d_\tau$ and $\tilde d_\tau$ and having zeta-functions related to the ones we introduced above. The first spectral triple corresponds to the construction given in \cite{KS10}.

Consider the C$^\ast$-algebra $C(\Xi)$ of continuous functions on $\Xi$.
Both spectral triples are over $C(\Xi)$ which means that they are given by
\begin{itemize}
\item a representation $\pi_\tau$ resp.\ $\tilde\pi_\tau$ of that algebra on a Hilbert space
$\Hh$ and $\tilde\Hh$ resp.,
\item a self adjoint (unbounded) operator $D$ resp.\ $\tilde D$ of compact resolvent such that
the commutator $[D, \pi_\tau(f)]$ and $[\tilde D, \tilde \pi_\tau(f)]$, resp.\ are bounded for a dense sub-algebra of $C(\Xi)$.
\end{itemize}
Here   the Hilbert spaces are given by $\Hh = \ell^2(E)$ and $\tilde \Hh = \ell^2(\tilde E)$ (where $E$, $\tilde E$ are the edges of the approximation graphs $\Gamma_\tau$ and $\tilde \Gamma_\tau$ defined in Section~\ref{pf11.ssect-approxgraph}) and
the corresponding representations $\pi_\tau$, $\tilde \pi_\tau$ , and Dirac operators $D$, $\tilde D$, by
\begin{equation}
\label{pf11.eq-reprDirac}
\left\{
\begin{array}{lll}
\pi_\tau(f)\phi(e) & = & f( s(e) )\, \phi(e) \\
 D\phi(e) &=& l(e)^{-1}\phi(e^\op)
\end{array}
\right. \qquad \text{\rm and} \qquad
\left\{
\begin{array}{lll}
\tilde\pi_\tau(f) \psi(e) & = & f( s(e) )\, \psi(e) \\
\tilde D \psi(e) &=& l(e)^{-1}\psi(e^\op)
\end{array}
\right.
\end{equation}
for $f\in C(\Xi)$, $\phi \in \Hh$ $\psi\in \tilde\Hh$, $e \in E$ or $\tilde E$, and we recall that for an edge $e=(\xi,\eta)$ we write $e^\op = (\eta,\xi)$.
Notice that the commutators of the Dirac operators with the representations read
\begin{equation}
\label{pf11.eq-commutator}
[D, \pi_\tau(f)] \phi (e) = \frac{f(s(e)) - f(r(e)) }{l(e)} \phi(e^\op) \,, 
\end{equation}
and
\begin{equation}
\label{pf11.eq-tcommutator}
[\tilde D, \tilde \pi_\tau(f)] \psi (e) = \frac{f(s(e)) - f(r(e)) }{l(e)} \psi(e^\op) \,,
\end{equation}
and can be extended to bounded operators on the corresponding Hilbert spaces for all $f$ in the pre-C$^\ast$-algebra of Lipschitz continuous functions over $\Xi$. By definition \cite{Co94} the distances defined by these spectral triples are, resp.\
\begin{equation}
\label{pf11.eq-specdist}
d_\tau (\xi, \eta) = \sup_{f\in C(\Xi)} \bigl\{ |f(\xi) - f(\eta)| \, : \, \| [D,\pi_\tau(f)] \|_{\Bb(\Hh)} \le 1
\bigr\} \,,
\end{equation}
and
\begin{equation}
\label{pf11.eq-tspecdist}
\tilde d_\tau (\xi, \eta) = \sup_{f\in C(\Xi)} \bigl\{ |f(\xi) - f(\eta)| \, : \, \| [\tilde D,\tilde\pi_\tau(f)] \|_{\Bb(\tilde\Hh)} \le 1
\bigr\} \,,
\end{equation}
where \(\| \cdot \|_{\Bb(\Hh)}\) and \(\| \cdot \|_{\Bb(\tilde\Hh)}\) denotes the operator norm on $\Hh$ and $\tilde \Hh$, respectively.
Now formulas \eqref{pf11.eq-commutator} and \eqref{pf11.eq-specdist} directly yield \eqref{def-m1}, while \eqref{pf11.eq-tcommutator} and \eqref{pf11.eq-tspecdist} directly yield (\ref{def-m2}).

\begin{prop}
\label{pf11.prop-spectrip}
Both \(\bigl(C(\Xi), \Hh,  D\bigr)\) and \(\bigl(C(\Xi), \tilde\Hh,  \tilde D\bigr)\) are even spectral triples.
\end{prop}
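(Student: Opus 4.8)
The plan is to verify the three defining axioms of an even spectral triple for each of the two triples $\bigl(C(\Xi),\Hh,D\bigr)$ and $\bigl(C(\Xi),\tilde\Hh,\tilde D\bigr)$; the arguments for the two cases are verbatim the same once one replaces right-special horizontal edges by privileged ones, so I would carry out the details only for the privileged triple $\bigl(C(\Xi),\tilde\Hh,\tilde D\bigr)$ and remark that the first one is treated in \cite{KS10}. First I would recall the grading: since every edge $e\in\tilde E$ has an opposite edge $e^\op$ and these come in distinct pairs (there are no loops, as $s(e)\neq r(e)$ always), the operator $\Gamma$ defined by $\Gamma\psi(e)=\psi(e^\op)$ is a self-adjoint unitary with $\Gamma^2=\id$; one checks $\Gamma\tilde\pi_\tau(f)=\tilde\pi_\tau(f)\Gamma$ is false in general, so instead the correct grading is the $\pm1$ eigenspace decomposition of the flip, under which $\tilde D$ is odd and $\tilde\pi_\tau(f)$ is even — more precisely, fix an orientation of each geometric edge (a choice of representative from each pair $\{e,e^\op\}$) and let $\Hh_+$ be spanned by the chosen representatives, $\Hh_-$ by their opposites; then $\tilde\pi_\tau(f)$ is block diagonal (it is diagonal in the edge basis) while $\tilde D$ is block off-diagonal by \eqref{pf11.eq-reprDirac}, which is exactly the evenness condition.

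Next I would check self-adjointness and compact resolvent of $\tilde D$. On each two-dimensional subspace $\ell^2\{e,e^\op\}$ the operator $\tilde D$ acts as $l(e)^{-1}$ times the flip, hence is the self-adjoint matrix $l(e)^{-1}\left(\begin{smallmatrix}0&1\\1&0\end{smallmatrix}\right)$ with eigenvalues $\pm l(e)^{-1}$; so $\tilde D$ is a (densely defined) self-adjoint operator with pure point spectrum $\{\pm l(e)^{-1}:e\in\tilde E\}$. Since $l(e)=\delta(|u\wee v|)$ for the privileged word $u\wee v$ and $\delta$ tends to $0$ at infinity while there are only finitely many privileged words of each order and each order gives only finitely many privileged horizontal edges, the eigenvalues $l(e)^{-1}$ tend to $+\infty$ with finite multiplicities; therefore $(\id+\tilde D^2)^{-1}$ is compact, i.e. $\tilde D$ has compact resolvent.

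Finally I would bound the commutator: formula \eqref{pf11.eq-tcommutator} shows that $[\tilde D,\tilde\pi_\tau(f)]$ acts on $\ell^2\{e,e^\op\}$ by the antidiagonal matrix with entries $\pm\bigl(f(s(e))-f(r(e))\bigr)/l(e)$, so its operator norm is $\sup_{e\in\tilde E}|f(s(e))-f(r(e))|/l(e)$. For $f$ Lipschitz with respect to the metric $\tdinf$ (equivalently the ultrametric $\tilde d$ of \eqref{pf11.eq-metric}, by Proposition~\ref{prop-inf}) we have $|f(s(e))-f(r(e))|\le C\,\tilde d(s(e),r(e))\le C\,\delta(|s(e)\wee r(e)|)=C\,l(e)$, since $s(e)\wee r(e)$ has $u\wee v$ as a prefix so its length is at least $|u\wee v|$ and $\delta$ is decreasing; hence the commutator is bounded by $C$, and such $f$ form a dense $\ast$-subalgebra of $C(\Xi)$ (Lipschitz functions separate points and the algebra is unital). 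I expect the only mild subtlety to be the bookkeeping for the grading — making precise that choosing an orientation of each edge-pair really does put $\tilde D$ in off-diagonal form while keeping $\tilde\pi_\tau$ diagonal — but this is routine; everything else reduces to the elementary $2\times2$ computation on each pair $\{e,e^\op\}$ together with the elementary properties of the weight function $\delta$ already recorded in Definition~\ref{def-weight} and Lemma~\ref{pf11.lem-infinitepaths}.
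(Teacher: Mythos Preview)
Your proof is correct and follows the same route as the paper, which simply refers to \cite{KS10} and notes that the privileged case is a straightforward adaptation; you have spelled out exactly those details (the orientation grading making $\tilde D$ odd and $\tilde\pi_\tau$ even, the $2\times 2$ block analysis giving self-adjointness and compact resolvent via $\delta\to 0$, and the Lipschitz bound on the commutator using $u\wee v\preceq s(e)\wee r(e)$). The only cosmetic point is that your opening detour through the flip operator $\Gamma\psi(e)=\psi(e^{\op})$ as a candidate grading is unnecessary and could be dropped in favor of going directly to the orientation splitting.
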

\begin{proof}
As in \cite{KS10} with simple adaptations for the second spectral triple.
\end{proof}
The zeta-functions of the spectral triples are given by the traces
\begin{equation}
\label{pf11.eq-zeta}
\zeta_D(s) = \TR_{\Hh} \bigl( |D|^{-s} \bigr) =  \frac12 \sum_{v \in \Tt^{(0)}}  a(v)(a(v)+1) \;  \delta(|v|)^s \,,
\end{equation}
and
\begin{equation}
\label{pf11.eq-tzeta}
\zeta_{\tilde D}(s) = \TR_{\tilde\Hh} \bigl( |\tilde D|^{-s} \bigr) =  \frac12 \sum_{v \in \Tt^{(0)}}  \tilde a(v)(\tilde a(v)+1) \;  \delta(|v|)^s \,,
\end{equation}
where $a(v)$ and $\tilde a(v)$ are given in Definition~\ref{pf11.def-av}. Again one expects convergence for sufficiently large real part of $s$. A direct comparison yields that, indeed,
$\zeta_D(s) = \frac12(\zeta_2(s)+\zeta_1(s))$ and \(\zeta_{\tilde D}(s)= \frac12(\tilde\zeta_2(s)+\tilde\zeta_1(s))\).





\end{document}